\documentclass[a4paper]{article}

\usepackage[utf8]{inputenc}             
\usepackage[T1]{fontenc}                
\usepackage[english]{babel}              
\usepackage{graphicx}                   
\usepackage{amsmath,amsfonts,amssymb}   
\usepackage{minted}                   
\usepackage[all]{xy}
\usepackage{amsthm}
\usepackage{bm}
\usepackage{csquotes}
\usepackage[bottom=3cm, top=3cm, left=3cm, right=3cm]{geometry}
\usepackage{algorithmic}
\usepackage{empheq}
\usepackage{hyperref}
\usepackage{appendix}
\usepackage{subcaption}
\usepackage{color}
\usepackage{enumitem}
\usepackage{authblk}
\usepackage{lmodern} 
\usepackage{bm}
\usepackage{mathtools}
\usepackage{nccmath}
\usepackage{ stmaryrd }
\usepackage{soul}
\usepackage[dvipsnames]{xcolor}
\usepackage[linesnumbered,ruled,vlined]{algorithm2e}
\usepackage[nameinlink, capitalise]{cleveref}
\usepackage[
    backend=biber,
    uniquename=false,
    bibencoding=utf8,
    sorting=nyt,
    doi=false, isbn=false, url=false,
    style=alphabetic,
    maxcitenames=2,
    uniquelist=false,
    maxbibnames=123,
    backref=false,  
    hyperref=true
]{biblatex}
\usepackage{booktabs}
\usepackage{adjustbox} 
\usepackage{multirow}
\usepackage{wrapfig}
\usepackage{threeparttable} 
\DeclareMathOperator{\R}{\mathbb{R}}

\DeclareMathOperator{\id}{id}

\newcommand{\Diff}{\operatorname{Diff}}

\newcommand{\Supp}{\operatorname{Supp}}
\newcommand{\Aff}{\operatorname{Aff}}
\newcommand{\aff}{\mathfrak{aff}}

\newcommand{\app}[4]{\begin{array}{ccl}
   #1 & \longrightarrow & #2 \\
   #3 & \longmapsto & #4 \\
\end{array}}

\newcommand{\D}[1]{\mathsf{D}_{#1}}

\newtheorem{definition}{Definition}[section]
\newtheorem{proposition}[definition]{Proposition}

\newtheorem{remark}[definition]{Remark}

\addto\captionsfrench{}
\hypersetup{
    colorlinks=true,
    linktoc=all,
    linkcolor=blue,   
    citecolor=red,    
    urlcolor=red,
    bookmarksnumbered=true,
    unicode=true
}
\PassOptionsToPackage{unicode}{hyperref}
\PassOptionsToPackage{naturalnames}{hyperref}

\title{A Framework for Joint Affine and Diffeomorphic Image Registration} 

\author[1]{Anton François\thanks{\texttt{anton.francois@ens-paris-saclay.fr}}}
\author[2]{Rayane Mouhli\thanks{\texttt{rayane.mouhli@math.cnrs.fr}}}
\author[1]{Thomas Pierron \thanks{\texttt{thomas.pierron@ens-paris-saclay.fr}}}

\affil[1]{ENS Paris-Saclay, Centre Borelli}
\affil[2]{Université Paris Cité, MAP5 \& Sorbonne Université, LJLL}

\date{}

\begin{document}
\maketitle

\begin{abstract}
    Anatomical image registration commonly relies on a sequential pipeline where an affine alignment is estimated first and then held fixed while a non-rigid diffeomorphic deformation is applied. This two-step process often leads to suboptimal results, as the initial stage can absorb local deformations, biasing the residual passed to the diffeomorphic registration. To address this, we introduce a \textit{Joint Affine-Diffeomorphic framework}, based on the large deformations model, that estimates both global affine and local diffeomorphic motions simultaneously within a single optimization. We propose two models: a Full Affine (FA) model that combines affine and diffeomorphic deformations, and a Decomposed Affine (DA) model that restricts the affine part from FA to rotations, translations, and anisotropic scalings. To numerically implement these models for image registration tasks, we develop a tailored optimization strategy that combines progressive affine enrichment, gradually increasing the complexity of the affine component, with a variational weighting scheme that smoothly manages the coarse-to-fine handover between the affine and diffeomorphic components. Evaluated on 2D synthetic datasets and 3D brain MRIs from the IXI cohort, our unsupervised approach avoids the pathological deformations of sequential baselines. We demonstrate that our joint formulation outperforms in Dice overlap two state-of-the-art deep learning foundation models, CARL and uniGradICON, as well as a sequential baseline using FLIRT for the classical affine registration followed by LDDMM. Our implementation is publicly available.\footnote{\url{https://github.com/antonfrancois/Demeter\_metamorphosis/}}
\end{abstract}
\tableofcontents
 \newpage

\section{Introduction \label{sec:intro}}

Shape analysis is a field of study developed in response to the increase of
innovations in medical imaging techniques. Its purpose is to compare several
shapes among a family of shapes to study their variability and perform
statistics on them. In particular, the deformation of a source shape onto a
target shape is a central topic since it makes it possible to track the
different geometric properties of shapes. In this paper we work in the
framework of the diffeomorphic shape spaces introduced by Trouvé et al.
\cite{Trouve1998}; this theory applies to images once the right action of the
diffeomorphism group and the associated discretisations are properly handled.

During MRI acquisition, the patient and the imaging volume are positioned in a
standard orientation; this positioning is only approximate, and orientation,
resolution and quality vary considerably from one image to the other.
Anatomical normalisation is therefore a standard pre-processing step in many
medical imaging analyses.

Comparing images requires spatially aligning them, which is done by finding a
deformation that puts voxels in correspondence and minimises a similarity
measure. The expected deformation must be a diffeomorphism, and we decompose it
into a global affine transformation (e.g.\ rotation, translation, scaling) and a
non-linear component that encodes local differences such as local growth. Most
of the time, the two components are estimated by different techniques: the
standard procedure involves an initial resampling and an affine registration
followed by a non-linear method applied to the affinely aligned image. However,
this two-step approach presents two principal drawbacks: 

First, a dependency on the affine registration output. The affine transformation
is global, whereas anatomical variability is largely local. When an anatomical
structure has undergone a significant deformation, the affine stage can absorb
part of this local deformation and attribute it to global pose or scale. The
residual passed to the non-linear stage is then already biased; in particular,
it may contain translational components that ideally should not influence the
non-linear mapping.

Second, a loss of raw signal fidelity. In radiology, raw MRI signal texture
can carry diagnostically relevant signatures. For instance, texture analysis
performed without prior interpolation has been shown to reveal subtle
microstructural changes related to Alzheimer's pathology
\cite{jytzler2024radiomics}. Each registration step involves an interpolation
of the signal, so a two-step pipeline resamples the image twice rather than
once; reducing the number of interpolations preserves signal fidelity and can
benefit downstream diagnosis or correspondence accuracy.

\paragraph{Related work}
Combining affine and non-linear deformations has been studied for more than two decades. \cite{rueckertNonrigidRegistrationUsing1999} is among the earliest proposals: the global motion is modelled by an affine transformation while the local motion is described by a free-form deformation (FFD) based on B-splines, estimated through a resolution refinement approach.
Later, the FSL suite \cite{jenkinson2001global, jenkinson2002improved} introduced two complementary tools: FLIRT for linear/affine registration, and FNIRT for the subsequent non-linear deformation, the latter being explicitly designed to initialise from the FLIRT affine output. The FSL toolbox remains widely used in the field for its computational efficiency and ease of use.
Among the most popular classical pipelines is Symmetric Normalization (SyN) \cite{avants2008symmetric_ants, avants2009advanced}, distributed within the ANTs toolbox. SyN is a diffeomorphic registration optimising a local normalised cross-correlation, and is symmetric in the sense that the forward and inverse deformations are estimated simultaneously, so that the result does not depend on which image is taken as the source. This symmetry, however, concerns the two directions of the mapping, not the two components of the deformation: in practice, an ANTs registration is run as successive rigid, affine and non-linear SyN stages, each initialised from the transformation returned by the previous one, and the reported transformation is their composition.
The \emph{elastix} toolbox \cite{klein2009elastix} is organised along the same lines: a configurable ITK-based framework supporting rigid, affine and B-spline non-linear registration as a sequence of stages.
All of these methods -- with the notable exception of
\cite{rueckertNonrigidRegistrationUsing1999} -- compute the affine deformation
first, fix it, and only then compute the non-linear part.

Our contribution builds upon the Large Deformation Diffeomorphic Metric Mapping
(LDDMM) framework \cite{Trouve1998, BegMillerTrouveYounes2005}; a detailed
introduction to this framework is provided in \cref{sec:action_images}.
Extensions of this framework by considering models that jointly optimised affine and non-affine deformation were previously explored by \cite{gris2016modular} that combined different types of deformation at the vector field level. More recently, \cite{mouhli2025decoupling} introduced a framework that couples deformations encoded by finite-dimensional Lie group, such as affine transformations with diffeomorphic deformations acting on point clouds via a semidirect product group. Our paper follows this line of research by extending this framework to image registration applications.

Early deep-learning registration methods addressed only the non-linear component of the decomposition introduced above, assuming the inputs had already been affinely aligned by a separate pre-processing step. VoxelMorph \cite{balakrishnan2019voxelmorph} is one of the precursors of this line. It trains a U-Net without supervision, using a similarity and a smoothness term, to predict a displacement vector at every voxel of the image domain -- a dense deformation field -- optionally constrained to be diffeomorphic.
Stronger non-linear models followed: TransMorph \cite{chen2022transmorph} replaces the U-Net with a Swin-Transformer backbone, while LapIRN \cite{mok2020large} relies
on a Laplacian-pyramid coarse-to-fine scheme to reach large deformations. All of
these methods presuppose an affine pre-alignment. A second body of work removes
this assumption by estimating the global transformation jointly with the local
one.

Deep-learning approaches to joint affine and non-linear registration follow two broad strategies. The first regresses the transformation parameters directly from image intensities. It splits further according to whether the global and the local transformations are produced by separate cascaded subnetworks or by a single one. \textbf{Cascaded methods} stack an affine subnetwork with one or more non-linear subnetworks. Each stage warps the moving image before passing it to the next, so that the non-linear component only has to model the residual local deformation. Representative examples are the multi-stage framework of De Vos et al. \cite{de2019deep}, the recursive cascaded networks of Zhao et al. \cite{zhao2019recursive}, and the rigid--affine--non-linear cascade of Strittmatter \cite{strittmatter2023multistage}.
\textbf{Single-network methods} instead emit both the affine parameters and the deformation field from one shared representation. This is typically done through
a coarse-to-fine decoder whose low-resolution levels predict the global transformation and whose high-resolution levels predict the local one, as in
NICE-Trans \cite{meng2023non}. uniGradICON \cite{tian2024unigradicon} follows a
related multi-step approach, but never estimates a separate affine
transformation. It composes several networks, each refining the warp produced by
the previous ones, so that the global alignment is absorbed into this
composition and the global and local parts cannot be read off independently.
Built on the inverse-consistency lineage of GradICON \cite{tian2023gradicon}, it
is offered as a ready-to-use foundation model that generalises across anatomies
and modalities without per-dataset training. As the strongest general-purpose
model of this kind, it is one of the baselines we benchmark against.

The second strategy forgoes direct parameter regression in favour of building geometric structure into the estimator. 
\textbf{Keypoint-based methods} detect corresponding landmarks and solve for the transformation in closed form: an affine by a least-squares fit, and a non-linear warp by thin-plate splines. Both the robustness to large initial misalignments and the interpretability of the result follow from this construction, as in KeyMorph \cite{evan2022keymorph} and its foundation-model extension BrainMorph \cite{wang2024brainmorph}. EasyReg \cite{iglesias2023ready} exploits a similar idea through segmentation: it derives the affine transformation from the centroids of segmentation regions, and estimates the non-linear part with a SynthMorph-style network \cite{hoffmann2024anatomy}.
\textbf{Equivariance-based methods} target the same
robustness through architectural guarantees rather than landmark detection. The principle is that warping an input image should produce a correspondingly warped
output transformation, so that large global misalignments are handled by construction rather than having to be represented in the training data. 
CARL \cite{greer2025carl} achieves this with a coordinate-attention block that computes correspondences as soft centroids of matched image features. This construction remains consistent when the moving and the fixed images are translated independently (termed $[W,U]$ equivariance). The block is then composed with conventional displacement-predicting refinement layers, which supply the fine local deformation.

These last two families are closely related, since aligning equivariant
keypoints is itself an equivariant operation. CARL also illustrates how porous
this taxonomy is: built on the TwoStep composition of the inverse-consistency
lineage \cite{tian2023gradicon}, it is simultaneously a cascade and an
equivariance method, and a sibling of uniGradICON within that lineage.
Common to all of these approaches is that they return a transformation, or a composition of transformations, rather than a trajectory: the affine and the non-linear components are estimated by a single network but do not live in a common variational framework, and no geodesic in shape space is available. 


In our own registration experiments, we observed that an affine stage alone does not always yield a satisfactory global alignment, particularly in the presence of large displacements or differing fields of view. The same difficulty motivates CARL, although the response we propose differs from it.
While deep-learning approaches now dominate the literature, classical
optimisation-based methods remain widely used and competitive. ANTs/SyN \cite{avants2008symmetric_ants} and NiftyReg \cite{langdon2014improving} are still standard tools, and LDDMM captures large deformations while guaranteeing a diffeomorphic transformation \cite{BegMillerTrouveYounes2005,phd_AFrancois}. In the Learn2Reg challenge \cite{hering2022learn2reg}, optimisation-based methods with little or no learning, such as ConvexAdam and corrField, remained among the top performers across several tasks. Yet, as the preceding survey has shown, the joint treatment of the affine and the non-linear components has been explored extensively on the deep-learning side, while it has received comparatively little attention in the classical setting.
To our understanding the trend at the time was to estimate the affine transformation first making them independent problems. In the LDDMM case, a decade ago the family was considered too computationally expensive to be practical, until modern implementations appeared \cite{franccois2021metamorphic, phd_AFrancois, brunn2021fast, hernandez2021combining}.

We aim to build a method within the LDDMM framework, non-statistical and
unsupervised, requiring neither training data nor annotations. Because it is formulated within LDDMM, the resulting transformation is diffeomorphic by construction; more than that, the registration follows a geodesic trajectory in the space of shapes, and this geodesic formulation is applied to the affine component as well as to the non-linear one. Its defining feature is thus to \textbf{produce a continuous registration trajectory -- a time-parameterised deformation rather than a single transformation -- jointly for the global and the local parts}.

\paragraph{Our contributions}

In this paper, we adapt the framework from \cite{mouhli2025decoupling} and consider the simultaneous action of a finite-dimensional Lie group and a group of diffeomorphisms on a shape space, thus extending the classical LDDMM setting. This general theoretical framework encompasses rigid motions and scalings as particular cases.

Our contributions are as follows:
\begin{enumerate}

    \item \textbf{Two joint LDDMM-affine models.}
    We propose two variational, non-statistical models for the simultaneous estimation of a global affine transformation and a local diffeomorphic deformation, formulated within the LDDMM framework. Both produce a time-parameterised geodesic in shape space whose affine and diffeomorphic components can be studied independently.
    We derive the geodesic equations for each model and introduce a change of variables that decouples the two components, simplifying these equations and making the respective contribution of each component analytically transparent.
    The two models share the same diffeomorphic component and differ only in how the affine part is parameterised:
    \begin{itemize}
        \item The \textit{Full Affine} (FA) model pairs the diffeomorphism with an unconstrained affine matrix and a translation vector.
        \item The \textit{Decomposed Affine} (DA) model factorises the affine component into rotation, translation, and anisotropic scaling, each of which can be independently activated or fixed during optimisation.
    \end{itemize}
    

    \item \textbf{Implementation.}
    We provide a full 2D and 3D implementation within the \textsc{Demeter-Metamorphosis} library \cite{franccois2021metamorphic, phd_AFrancois}, a PyTorch-based framework supporting GPU acceleration. This includes two optimisation strategies specific to joint registration: a progressive covector activation scheme and a variational weighting schedule that prevents the diffeomorphic component from dominating the early stages of optimisation.

    \item \textbf{Experimental validation on toy examples.}
    We demonstrate on 2D synthetic examples that joint estimation avoids the local minima that the standard sequential approach --- affine pre-registration followed by independent diffeomorphic optimisation --- is prone to, and that the DA model correctly isolates each geometric component when applied to targets generated by known transformations.

    \item \textbf{Validation against recent deep learning methods.}
    We evaluate our method on a brain template registration task using the IXI dataset, measuring volumetric overlap via Dice scores across anatomical structures. Our classical, optimisation-based method outperforms the deep learning baselines uniGradICON \cite{tian2024unigradicon} and CARL \cite{greer2025carl} without requiring any training data. In particular, we show that joint optimisation produces an affine registration that is a better starting point for diffeomorphic matching than a separately estimated one.

\end{enumerate}

\section{Theoretical framework}\label{Sec:theoretical_framework}

In this section, we present the theoretical framework for the image registration problem by affine and diffeomorphic deformations. Our approach is based on the large deformation framework (LDDMM,\cite{BegMillerTrouveYounes2005}) that we enrich with affine deformations. The resulting group of deformations is constructed as a semi-direct product of the group of diffeomorphisms and the group of affine transformations following the approach described in \cite{mouhli2025decoupling}, and building on ideas developed in \cite{MMRI,BruRiVia,risser2010}. A similar framework has been developed \cite{pierron2024graded} to perform a sequential matching, using a direct product of a finite-dimensional Lie group representing affine motions and a group of diffeomorphisms.
 We also refer to  \cite{gris2015sub} that develop a similar approach by defining a modular deformation framework allowing the combination of several structured vector fields to generate more complex deformations.
 
We begin by recalling several classical results on diffeomorphic transport of images and we then explain how the large deformation framework can be enriched with affine deformations. The main challenges are that the action of diffeomorphisms on images is continuous but not differentiable and the action of affine group on images is ill-defined. Consequently many arguments from the standard orbit model \cite{articleTro1995,ARGUILLERE2015139,pierron2024graded,Pierron2024, PierronThesis} must  be adapted.

\subsection{Diffeomorphic transport of images} \label{sec:action_images}


We start by briefly reviewing the large deformation framework introduced in \cite{Trouve1998,BegMillerTrouveYounes2005}, which is a well-established approach for image registration. The purpose of this setting is to match a source image onto a target using deformations represented as flows of diffeomorphims.
Let $\mathcal{I} := L^2(\Omega,\R)$ be the set of grey scale images on the image domain $\Omega \subset \R^d$, and let $I_S\in\mathcal{I}$ be a source image that we want to match onto a target $I_T\in\mathcal{I}$. We denote $C_0^k(\R^d,\R^d)$ the space of functions that vanish at infinity, and whose derivatives also vanish. Then, we introduce $\operatorname{Diff}_{C_0^k}(\R^d)$ the group of $C_0^k$-diffeomorphisms that tend to identity at infinity, and whose derivatives also tend to identity at infinity, defined by
 \[
\Diff_{C_0^k}(\R^d) = \left(\id+C_0^k(\R^d,\R^d)\right)\cap \Diff^1(\R^d)
\] 
A time-varying vector field $v \in L^2([0,1],C_0^k(\R^d,\R^d))$ can induce a flow of diffeomorphisms belonging to $\operatorname{Diff}_{C_0^k}(\R^d)$ by solving the ODE: $ \dot{\varphi}_t = v_t \circ \varphi_t$ with $\varphi_0=\id$. Therefore $t\mapsto \varphi_t $ is named the flow of $v$.

We recall that in the large deformation framework, the trajectory of an image is defined as the action of a flow of diffeomorphisms in $\Diff_{C_0^k}(\Omega)$ on the template image \cite{Dupuis1998,Younes2019} : \[I_t  = \varphi_t\cdot I_S := I_S \circ \varphi_t^{-1}.\] This action is only continuous because of the composition on the right by the inverse of $\varphi$. This means, in particular, that more regularity on the image $I$ is required to differentiate this action and to define the infinitesimal action $\xi_I : v \mapsto v \cdot I := \partial_{\varphi}(\varphi \cdot I) \vert_{\varphi=\id} v$ \cite{ARGUILLERE2015139}. Indeed, as stated in the next proposition, for $I\in H^1(\Omega,\R)$, the differential of the action $\varphi \mapsto \varphi \cdot I$ exists and is well-defined:
\[
\xi_I:\app{C_0^k(\Omega,\R^d)}{\mathcal{I}}{v}{-\langle \nabla I,v\rangle}
\]
The evolution of the deformed image can now be derived from this infinitesimal action 
\begin{equation*}
\label{dyn_I}
    \dot{I}_t = v_t \cdot I_t := -\langle \nabla I_t , v_t \rangle, \quad I_0=I_S
\end{equation*} where $v_t=\dot{\varphi}_t\circ\varphi_t^{-1}$ is the Eulerian derivative. 

\begin{proposition}[Deformation of images]
\label{flow_images}
    Suppose $I_S\in H^1(\Omega,\R)$ and $v\in L^2([0,1],C_0^1(\Omega,\R^d))$. Then there exists a unique global solution of 
    \begin{equation}
    \label{Ani:eq_image}
            \dot{I}_t=-\langle\nabla I_t, v_t\rangle, \quad I_0=I_S.
    \end{equation}
    Moreover this unique solution $I_t$ also satisfies the integrated equation
    \begin{equation}
    \label{Ani:eq_image_int}
    I_t = I_S\circ\varphi_t^{-1}
    \end{equation}
    where $\varphi_t$ is the flow of $v_t$.
\end{proposition}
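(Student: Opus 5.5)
We argue in three steps: existence via the explicit candidate $I_t := I_S\circ\varphi_t^{-1}$, verification that it solves \eqref{Ani:eq_image}, and uniqueness via the method of characteristics (constancy along flow lines). The equation is understood in the weak sense: $t\mapsto I_t$ is continuous into $L^2(\Omega,\R)$, absolutely continuous into a weaker space such as $H^{-1}$ (or distributions), and satisfies $\dot I_t = -\langle \nabla I_t, v_t\rangle$ for a.e.\ $t$, the right-hand side lying in $L^1$ locally since $\nabla I_t\in L^2$ and $v_t$ is bounded.

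\textbf{Step 1: the flow.} Since $v\in L^2([0,1],C_0^1)\subset L^1([0,1],C_0^1)$, the Carathéodory version of Cauchy--Lipschitz shows that $\dot\varphi_t=v_t\circ\varphi_t$, $\varphi_0=\id$, has a unique global solution. Each $\varphi_t$ is a $C^1$ diffeomorphism in $\Diff_{C_0^1}$, and $\varphi_t^{-1}$ solves the backward flow. By Grönwall, $\|D\varphi_t\|_\infty$ and $\|D\varphi_t^{-1}\|_\infty$ are bounded by $\exp\int_0^1\|Dv_s\|_\infty ds$, and the Jacobians $\det D\varphi_t^{\pm1}$ are bounded above and below by Liouville's formula.

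\textbf{Step 2: the candidate solves the equation.} These Jacobian bounds imply that $I\mapsto I\circ\varphi_t^{-1}$ is a bounded linear map on $L^2$ and on $H^1$, uniformly in $t$. It follows that $I_t=I_S\circ\varphi_t^{-1}\in H^1$ with $\nabla I_t = (D\varphi_t^{-1})^{T}(\nabla I_S\circ\varphi_t^{-1})$. To differentiate in time, first take $I_S\in C_c^\infty$. The identity $\varphi_t^{-1}\circ\varphi_t=\id$ gives $\partial_t\varphi_t^{-1} = -D\varphi_t^{-1}\, v_t$, so the chain rule yields
\[
\partial_t I_t = \langle \nabla I_S\circ\varphi_t^{-1}, -D\varphi_t^{-1}v_t\rangle = -\langle \nabla I_t, v_t\rangle
\]
for a.e.\ $t$. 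Equivalently, in integrated form, $I_t - I_S = -\int_0^t\langle\nabla I_s,v_s\rangle ds$. For general $I_S\in H^1$, approximate by $I_S^n\in C_c^\infty$ in $H^1$. The uniform bounds of Step 1 let us pass to the limit in the integrated form in $L^2$, and also give continuity of $t\mapsto I_t$ in $L^2$.

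\textbf{Step 3: uniqueness.} This is the main obstacle, since $I_t$ has only $H^1$ regularity and a direct energy estimate requires care. By linearity it suffices to show that a solution $J$ with $J_0=0$ vanishes identically. Let $K_t := J_t\circ\varphi_t$. For a test function $\psi$, change variables to obtain
\[
\int K_t\psi\,dx=\int J_t\,(\psi\circ\varphi_t^{-1})\,|\det D\varphi_t^{-1}|\,dx .
\]
Now differentiate in $t$: the time derivative of $\psi_t:=(\psi\circ\varphi_t^{-1})|\det D\varphi_t^{-1}|$ equals $-\operatorname{div}(\psi_t v_t)$. Since $v_t\in C^1$, this is an admissible test function, and integrating $\langle\nabla J_t,v_t\rangle\psi_t$ by parts cancels it exactly. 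Hence $\frac{d}{dt}\int K_t\psi=0$, so $K_t=K_0=0$ and $J\equiv0$.

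One delicate point remains. The $C^1$ regularity of $\psi_t$ in space needs $D\varphi_t$ to be differentiable, which fails when $v$ is only $C^1$. We would handle this either by mollifying $\psi_t$ and using the commutator (DiPerna--Lions) lemma, or more simply by testing only against $\psi_t=\psi\circ\varphi_t^{-1}$ and tracking the divergence term $\operatorname{div} v_t$, which is continuous and bounded. Alternatively, one can run an $L^2$ energy estimate $\frac{d}{dt}\|J_t\|^2 = \int \operatorname{div}(v_t)\, J_t^2$, justified by the same commutator argument. Grönwall then gives $J\equiv 0$, which is the route we would try first.
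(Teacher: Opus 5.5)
Your argument is correct. It keeps the paper's skeleton: the explicit candidate $I_S\circ\varphi_t^{-1}$ for existence, and constancy of $J_t\circ\varphi_t$ tested against smooth functions for uniqueness. The execution differs usefully, though.

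\textbf{Existence.} The paper simply asserts that $(\varphi,I)\mapsto I\circ\varphi^{-1}$ is $C^1$ from $\Diff_{C_0^1}(\R^d)\times H^1$ to $L^2$, and reads off absolute continuity. Your density argument (smooth data, chain rule, uniform bounds on composition in $L^2$ and $H^1$, passage to the limit in the integrated equation) is in effect a proof of that claim. On a bounded $\Omega$, approximate by $C^\infty(\Omega)\cap H^1(\Omega)$, since $C_c^\infty$ is only dense in $H^1_0$.

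\textbf{Uniqueness.} The paper differentiates $\int_\Omega I_t(\varphi_t(x))h(x)\,dx$ directly by the chain rule. It does this formally, and the step takes real work to justify for a curve that is only absolutely continuous in $L^2$ and $H^1$ in space. Your change of variables moves the flow onto the test function $\psi_t$, which solves the continuity equation; this is a cleaner way to carry out the same computation. Your energy estimate is a genuinely different argument: it bypasses the flow entirely and only uses $\|\operatorname{div}v_t\|_\infty\in L^1(0,1)$. The paper's route is shorter; yours can actually be checked line by line.

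\textbf{The ``delicate point'' is not an obstacle, and no commutator lemma is needed.} Your solutions satisfy $J_t\in H^1$, so:
\begin{itemize}
\item The identity $\partial_t\psi_t=-\operatorname{div}(\psi_t v_t)$ is only needed in the distributional sense. It holds with $\psi_t$ merely continuous, by differentiating $\int g\,\psi_t=\int (g\circ\varphi_t)\,\psi$.
\item Its pairing with $J_t$ equals $\int\psi_t\langle\nabla J_t,v_t\rangle$ by the very definition of the weak gradient, because $\psi_t v_t$ is continuous and compactly supported. No spatial differentiability of $\psi_t$ enters.
\item Likewise $J_t^2\in W^{1,1}$ with $\nabla(J_t^2)=2J_t\nabla J_t$. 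Hence $-2\int J_t\langle\nabla J_t,v_t\rangle=\int J_t^2\operatorname{div}v_t$ is a plain integration by parts, with no boundary term since $v_t\in C_0^1(\Omega)$.
\end{itemize}
DiPerna--Lions renormalization is only required for solutions that are merely $L^2$ in space.

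\textbf{Which uniqueness route to use.} The energy route is the better choice, provided you take the uniqueness class to be curves absolutely continuous in $L^2$ rather than in $H^{-1}$, as the paper does. Then $t\mapsto\|J_t\|_{L^2}^2$ is automatically absolutely continuous and Gr\"onwall closes the argument. In the duality variant you would still need absolute continuity of $t\mapsto\int J_t\psi_t$, which requires extra control such as $\sup_t\|J_t\|_{H^1}<\infty$.
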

\begin{remark}
    The integrated version gives us in particular that the flow $I_t$ belong to $H^1(\Omega,\R)$ for all time $t$, but is only absolutely continuous when $I_t$ lies in $L^2$.
\end{remark}
\begin{proof}
Let $v\in L^2([0,1],C_0^1(\Omega,\R^d))$. We consider the curve $\varphi\in AC_{L^2}([0,1],\operatorname{Diff}_{C_0^{1}}(\R^d))$ which is the flow of $v$, that is the unique solution of the ODE
\[
\dot{\varphi}_t = v_t\circ\varphi_t, \quad \varphi_0=\id.
\]
    Since the restriction of the action to $H^1$ images
    \[
    \app{\operatorname{Diff}_{C_0^1}(\R^d)\times H^1(\Omega,\R)}{L^2(\Omega,\R)}{\varphi, I}{I\circ\varphi^{-1}}
    \] is $C^1$, the curve $t\mapsto I_S\circ\varphi_t^{-1}\in H^1(\Omega,\R)$ is therefore absolutely continuous in $\mathcal{I}=L^2(\Omega,\R)$ and is solution of \cref{Ani:eq_image}. 
    
    Conversely, suppose $t\mapsto I_t$ is an absolutely continuous curve in $\mathcal{I}$, such that for any $t\in I$, $I_t\in H^1(\Omega,\R)$ and $I_t$ is solution of \cref{Ani:eq_image}. We prove now that for all $t\in [0,1]$,
    \[
    I_t\circ\varphi_t=I_S.
    \]
    The curve $t\mapsto I_t\circ\varphi_t$ is not necessarily absolutely continuous in $\mathcal{I}$, since we only have $I_t\in\operatorname{AC}_{L^2}([0,1],\mathcal{I})$. However for any $h\in C_0^\infty(\Omega,\R)$, we can derivate $t\mapsto \int_{\Omega}\langle I_t\circ\varphi_t(x),h(x) \rangle dx$ and we get
\begin{align*}
    \frac{d}{dt}\int_{\Omega} I_t(\varphi_t(x))\,h(x) dx &= \int_\Omega \left[ \dot{I}_t(\varphi_t(x)) + \langle\nabla I_t(\varphi_t(x)), \dot{\varphi}_t(x) \rangle \right] h(x) dx \\
    &= \int_\Omega \left[ -\langle\nabla I_t(\varphi_t(x)), v_t(\varphi_t(x)) \rangle + \langle \nabla I_t(\varphi_t(x)), v_t(\varphi_t(x)) \rangle \right] h(x) dx \\
    &=0
   \end{align*}
from which the result follows.
\end{proof}

Following, the large deformation framework, we consider vector fields belonging to a Reproducing Kernel Hilbert Space (RKHS) of vector fields $V$, continuously embedded in $C_0^1(\R^d,\R^d)$. The RKHS structure is particularly interesting because it relies only on a kernel function to generate the entire space, leading to a metric that is both mathematically tractable and computationally straightforward to implement.
We formulate the standard registration problem, that consists in matching the source $I_S$ onto the target $I_T$ using diffeomorphic deformations and expressed as the following minimization problem:

\begin{eqnarray}
    \label{eq:lddmm_image}
\inf_{v\in L^2({[0,1],V)}} J(v) 
&=& \int_0^1   \frac{1}{2}   \Vert v_t \Vert_V^2 \, dt 
+ \mathcal{D}(I_1) \\
     \text{ s.t }& &     \left\{
        \begin{array}{ll}
\dot{I}_t = -\langle\nabla I_t, v_t\rangle, \\
 I_0=I_S        
        \end{array} 
        \right.\notag
\end{eqnarray}
where $\mathcal{D} : \mathcal{I}\to\R$ is a data attachment term that measures the discrepancy between the target $I_T$ and the deformed image $I_1$. If $\mathcal{D}$ is continuous, then this problem admits minimizers \cite{Pierron2024,ARGUILLERE2015139}.

The Pontryagin Maximum Principle (PMP) \cite{ARGUILLERE2015139,pierron2024graded} corresponding to the problem \eqref{eq:lddmm_image} provides necessary conditions for optimal solutions. Consider the following Hamiltonian:\cite{ARGUILLERE2015139,BegMillerTrouveYounes2005}
\begin{equation}\label{eq:hamilt}
H(I,p,v) = (p \mid - \langle \nabla I,v\rangle) - \frac{1}{2} \Vert v \rVert_V^2    
\end{equation}

where $K_V$ is the Riesz isomorphism corresponding to the reproducing kernel associated with $V$ and $p_t\in L^2(\Omega,\R^d)$ is a covector. Therefore, $v \in L^2([0,1],V)$ is an optimal solution of \eqref{eq:lddmm_image} if there exists a time-dependent covector $p_t$ such that the following Hamiltonian equations are satisfied:
\begin{equation} \label{eq:standard_hamilt_eq}
    \left\{
    \begin{aligned}
        \dot I_t &= \partial_p H(I_t,p_t,v_t) \\
        \dot p_t &= - \partial_I H(I_t,p_t,v_t) \\
        0 &= \partial_v H(I_t,p_t,v_t)
    \end{aligned}
    \right.
    \quad \Longleftrightarrow \quad
    \left\{
    \begin{aligned}
        \dot I_t &= - \langle \nabla I_t, v_t \rangle \\
        \dot p_t &= - \operatorname{div}(p_t v_t) \\
        v_t &= - K_V(p_t\nabla I_t)
    \end{aligned}
    \right.
\end{equation}
with the endpoint condition $p_1=-\partial_I\mathcal{D}(I_1)$.
Note that the Hamiltonian equation for $I$ and $p$ take the form of an advection equation and a continuity equation, respectively. By replacing the optimal expression of $v$, the Hamiltonian simplifies to
\[
H(I,p) = \frac{1}{2} \lVert K_V(p\nabla I)\rVert_V^2
\]

Therefore, the variational problem \eqref{eq:lddmm_image} can be expressed as a minimization problem over the covector $p$.

\begin{eqnarray}
    \label{eq:lddmm_image_p}
\inf_{p} J(p) 
&=& \int_0^1   \frac{1}{2}   \Vert K_V(p_t\nabla I_t)  \Vert_V^2 \, dt 
+ \mathcal{D}(I_1) \\
    & & \text{ s.t \cref{eq:standard_hamilt_eq} are satisfied and } I_0=I_S     \notag
\end{eqnarray}
Finally, by conservation property, is it sufficient to minimize with respect to the covector at initial time $p_0$, which is called geodesic shooting.

The large deformation framework presented in this section serves as the foundation for the models developed in this work. By enriching the space of diffeomorphisms with affine motions, we will derive new geodesic equations distinct from the standard ones given in \eqref{eq:standard_hamilt_eq}, leading to new minimization problems adapted to the joint affine and diffeomorphic deformation framework.

\subsection{The group of affine motions} \label{sec:affine_motions}

Building upon the large deformation introduced previously, this section extends the model to incorporate affine motions in addition to standard diffeomorphic deformations. We refer to this framework as the \textit{Full Affine} model.

\subsubsection{First definitions and properties} \label{sec:affine_motions:action}

The Lie group of affine motions is defined by the semi-direct product

\begin{equation}
    \operatorname{Aff}(\mathbb{R}^d)  \coloneq \operatorname{GL}_d(\mathbb{R}) \ltimes \mathbb{R}^d
\end{equation}
with group laws given by
\[
\left\{\begin{array}{l}
(A,b)(A',b') = \left(AA',Ab'+b\right)  \\
      (A,b)^{-1}=(A^{-1},-A^{-1}b).
\end{array} \right.
\]
Its corresponding Lie algebra is denoted \begin{equation}
    \mathfrak{aff}(\R^d) := \R^{d \times d} \oplus \R^d
\end{equation}

The group $\Aff(\R^d)$ naturally acts on $\R^d$ via $(A,b) \cdot x = Ax +b$. We therefore define the action of $\Aff(\R^d)$ on the space of images $\mathcal{I}$ by:

\begin{equation}
\label{rigid_action_I1}
    ((A,b)\cdot I)(x)= I(A^{-1}(x-b)) \, \text{ for } x \in \Omega
\end{equation}

However this does not exactly define an action since  images are defined on the open set $\Omega \subset \R^d$ that is not necessarily stable by the action of the affine group. This means that a loss of information may occur when the affine motion is too large. However, in most of applications, the support of images is strictly contained within $\Omega$ and we can affinely transport them without loss of information.

\begin{proposition}[Transport of the support]\label{prop:transport_support}
Let $I\in\mathcal I$ and denote by $\Supp(I)\subset\Omega$ its support.
Let $(A,b),(A',b')\in\Aff(\mathbb R^d)$ and define
\[
S_1 \coloneq A'\Supp(I) + b', \qquad
S_2 \coloneq A S_1 + b \;=\; A\bigl(A'\Supp(I)+b'\bigr) + b .
\]
Assume that $S_1,S_2 \subset \Omega$, then
\[
(A,b)\cdot\bigl( (A',b')\cdot I \bigr)
= (AA',\, Ab' + b)\cdot I.
\]
Moreover $S_1=\Supp\bigl((A',b')\cdot I\bigr)$ and
$S_2=\Supp\bigl((A,b)\cdot((A',b')\cdot I)\bigr)$.
\end{proposition}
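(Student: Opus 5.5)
The proof rests on two facts: extending images by zero outside $\Omega$ makes the formula \eqref{rigid_action_I1} a genuine action on functions on $\R^d$, and the hypothesis $S_1,S_2\subset\Omega$ ensures that restricting back to $\Omega$ loses nothing. Concretely, I will identify $I$ with its extension $\tilde I$ by zero to $\R^d$. For $g=(A,b)\in\Aff(\R^d)$ and a function $f$ on $\R^d$, set $(g\star f)(x)=f(A^{-1}(x-b))$. The image action of the statement is then $(A,b)\cdot I = (g\star\tilde I)|_{\Omega}$.

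\textbf{Step 1 (action on $\R^d$).} Using the group law and inverse formula recalled above, check that $g\star(g'\star f)=(gg')\star f$. Since $(gg')^{-1}=g'^{-1}g^{-1}$, this reduces to the identity
\[
A'^{-1}\bigl(A^{-1}(x-b)-b'\bigr)=(AA')^{-1}\bigl(x-(Ab'+b)\bigr),
\]
which holds by direct expansion.

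\textbf{Step 2 (transport of the support).} For the extended function, $\Supp(g\star f)=A\,\Supp(f)+b$. The map $x\mapsto Ax+b$ is a homeomorphism of $\R^d$, so it maps the closure of $\{f\neq 0\}$ onto the closure of its image, and $\{g\star f\neq0\}=A\{f\neq0\}+b$. If supports are taken in the essential sense for $L^2$ functions, the same conclusion holds because affine bijections preserve null sets. Applying this with $g'=(A',b')$ and $f=\tilde I$ gives $\Supp(g'\star\tilde I)=S_1$.

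\textbf{Step 3 (restriction is harmless).} Since $S_1\subset\Omega$, the function $g'\star\tilde I$ vanishes outside $\Omega$. Hence the zero-extension of $(A',b')\cdot I=(g'\star\tilde I)|_\Omega$ equals $g'\star\tilde I$ itself, and $\Supp((A',b')\cdot I)=S_1$. Applying $(A,b)\cdot$ to this image therefore gives $(g\star(g'\star\tilde I))|_\Omega$. By Step 1 this equals $((gg')\star\tilde I)|_\Omega=(AA',Ab'+b)\cdot I$.

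For the support claim, Step 2 gives $\Supp(g\star(g'\star\tilde I))=AS_1+b=S_2$, which lies in $\Omega$ by hypothesis. The restriction to $\Omega$ again does not change the support, so $S_2=\Supp\bigl((A,b)\cdot((A',b')\cdot I)\bigr)$.

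The main obstacle is conceptual rather than computational. One has to make precise what the action means for points $x\in\Omega$ with $A^{-1}(x-b)\notin\Omega$; the zero-extension convention resolves this. One must also state which notion of support is used, since $I\in L^2$; either choice is handled by the observation that affine bijections are homeomorphisms preserving null sets. Everything else is the algebraic identity of Step 1.
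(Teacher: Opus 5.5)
Your proof is correct and follows essentially the same route as the paper: the support formula $\Supp((A,b)\cdot I)=A\Supp(I)+b$, the hypothesis $S_1\subset\Omega$ to rule out loss of information, and the algebraic identity $A'^{-1}(A^{-1}(x-b)-b')=(AA')^{-1}(x-(Ab'+b))$. Your zero-extension convention and your remark on the notion of support simply make rigorous a point the paper's computation uses implicitly, namely evaluating $(A',b')\cdot I$ at points $A^{-1}(x-b)$ that may lie outside $\Omega$.
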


\begin{proof}
Recall that for any $(A,b)\in\Aff(\mathbb R^d)$ the transformed support satisfies
\[
\Supp\bigl((A,b)\cdot I\bigr) = A\Supp(I)+b,
\]
since $((A,b)\cdot I)(x)=I(A^{-1}(x-b))$ is nonzero if and only if
$A^{-1}(x-b)\in\Supp(I)$, i.e.\ $x\in A\Supp(I)+b$.
Thus $S_1=\Supp((A',b')\cdot I)$, and the assumption $S_1\subset\Omega$
guarantees no loss of information after the first action.

Now for $x\in\Omega$,
\[
\begin{aligned}
\bigl((A,b)\cdot((A',b')\cdot I)\bigr)(x)
&= ((A',b')\cdot I)\bigl(A^{-1}(x-b)\bigr) \\
&= I\!\left(A'^{-1}\!\left(A^{-1}(x-b)-b'\right)\right).
\end{aligned}
\]
We rewrite the argument as
\[
A'^{-1}\bigl(A^{-1}(x-b)-b'\bigr)
= (AA')^{-1}\bigl(x - (b + Ab')\bigr).
\]
Hence
\[
\bigl((A,b)\cdot((A',b')\cdot I)\bigr)(x)
= I\!\left((AA')^{-1}(x-(b+Ab'))\right)
= \bigl((AA',\,Ab'+b)\cdot I\bigr)(x),
\]
which proves the desired equality of actions.

Finally, applying the support formula a second time yields
\[
\Supp\bigl((A,b)\cdot((A',b')\cdot I)\bigr)
= A\,\Supp\bigl((A',b')\cdot I\bigr) + b
= A S_1 + b = S_2,
\]
and the assumption $S_2\subset\Omega$ prevents any loss of information.
\end{proof}

To define more complex deformations, we enrich the group of diffeomorphisms with the group of affine motions by the introduction of the following semidirect product:
\begin{equation}\Aff(\mathbb{R}^d)\ltimes \Diff_{C_0^k}(\mathbb{R}^d).
\end{equation}
The associated group laws  given by
\[
\left\{\begin{array}{l}
    (A,b,\varphi)(A',b',\varphi') = (AA',Ab'+b,A'^{-1}\varphi(A' \varphi'+b') - A'^{-1}b')  \\
      (A,b,\varphi)^{-1}=(A^{-1},-A^{-1}b,A\varphi^{-1}(A^{-1}(\id-b))+b).
\end{array} \right.
\] 
and its action on the space of images $\mathcal{I}$ is defined by
\begin{equation*}
    ((A,b,\varphi)\cdot I)(x) = I(\varphi^{-1}(A^{-1}(x-b)).
\end{equation*}

Note that the stability issue of $\Omega$ under the action of the affine group arises here as well. However, it is similarly resolved using \cref{prop:transport_support}.

As detailed in \cref{sec:action_images}, for $I \in \mathcal{I}=L^2(\Omega, \R)$, the action is not regular enough to define an infinitesimal action. Therefore, we will assume that $I \in H^1(\Omega,\R^d)$.

\begin{proposition}
    Let $I_S \in H^1(\Omega,\R ^d)$ be a template image. For $(M,\beta,v) \in L^2([0,1],\aff(\R^d) \oplus C_0^k(\R^d,\R^d))$, the evolution equation of $I_S$ is
        \begin{equation}\label{eq:dyn_I}
            \dot{I}_t = - \langle \nabla I_t(x),v_t(x) + M_tx + \beta_t\rangle
        \end{equation}
where  $I_0=I_S$.
\end{proposition}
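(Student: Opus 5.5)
We reduce the statement to \cref{flow_images} by building a single flow of diffeomorphisms that carries both the affine and the diffeomorphic motion. Given $(M,\beta,v)\in L^2([0,1],\aff(\R^d)\oplus C_0^k(\R^d,\R^d))$, define the curve $(A_t,b_t,\varphi_t)$ in $\Aff(\R^d)\ltimes\Diff_{C_0^k}(\R^d)$ driven by the right-invariant velocity $(M_t,\beta_t,v_t)$. This curve should produce the composite map $\Phi_t:=(A_t,b_t,\varphi_t)\cdot x$, through which the image evolves as $I_t=I_S\circ\Phi_t^{-1}$. Concretely, we define $\Phi_t$ directly as the flow of the time-dependent field $u_t(x)=v_t(x)+M_tx+\beta_t$:
\[
\dot\Phi_t = u_t\circ\Phi_t,\qquad \Phi_0=\id .
\]
This is not a $C_0^k$ field, but it is $C^1$ and globally Lipschitz in $x$ with an $L^1$-in-time Lipschitz constant, namely $\|Dv_t\|_\infty+|M_t|$. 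Hence the Carathéodory theory gives a unique global flow of $C^1$-diffeomorphisms. We then check that this flow factorises in the group. Let $(A_t,b_t)$ solve $\dot A_t=M_tA_t$, $\dot b_t=M_tb_t+\beta_t$, starting from $(\id,0)$. Then $\psi_t:=(A_t,b_t)^{-1}\cdot\Phi_t$ is the flow of the conjugated field $x\mapsto A_t^{-1}v_t(A_tx+b_t)$, which lies in $C_0^k$. This identifies $\Phi_t$ with the action of $(A_t,b_t,\psi_t)$ and justifies the phrase ``evolution equation of $I_S$'' under this group action.

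Next, we redo the argument of \cref{flow_images} with $u_t$ in place of $v_t$. For the existence direction, set $I_t:=I_S\circ\Phi_t^{-1}$. Since $I_S\in H^1$ and $\Phi_t^{-1}$ is $C^1$ with Jacobian bounded uniformly in $t$ (by Grönwall on $D\Phi_t$), we get $I_t\in H^1$. Differentiating in $L^2$, by the chain rule applied first for smooth $I_S$ and then by density using uniform bounds, gives
\[
\dot I_t=-\langle\nabla I_t,\partial_t\Phi_t\circ\Phi_t^{-1}\rangle=-\langle \nabla I_t,v_t+M_tx+\beta_t\rangle .
\]
For the uniqueness direction, we take any solution and test $t\mapsto\int_\Omega I_t(\Phi_t(x))h(x)\,dx$ against $h\in C_0^\infty$. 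Exactly as in the earlier proof, the derivative vanishes, so $I_t\circ\Phi_t=I_S$.

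The main obstacle is that the affine part makes $u_t$ unbounded, growing linearly at infinity, and that $\Omega$ is not stable under the flow. Both the $C^1$-regularity of the action used in \cref{flow_images} and the change of variables in the weak argument are therefore not immediately available. To handle the growth, we use the global Lipschitz bound above and work on compact sets: $\Phi_t$ moves points by at most $C(1+|x|)$. To handle the domain, we assume, as in \cref{prop:transport_support}, that $\Supp(I_t)$ stays inside $\Omega$ along the trajectory, so that extension by zero is consistent. Under this assumption, the change of variables $y=\Phi_t(x)$ has a Jacobian bounded above and below, and the density argument closes.
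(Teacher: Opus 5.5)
Your proposal is correct, but it runs in the opposite direction from the paper's proof.

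\textbf{The paper's route.} The paper starts from the group. It solves the right-invariant ODE $(\dot A_t,\dot b_t,\dot\varphi_t)=(M_t,\beta_t,v_t)(A_t,b_t,\varphi_t)$ in $\Aff(\R^d)\ltimes\Diff_{C_0^k}(\R^d)$ and writes $I_t=I_S(\varphi_t^{-1}(A_t^{-1}(x-b_t)))$. It then differentiates the action at the identity to get the infinitesimal action $\xi_I(M,\beta,v)=-\langle\nabla I,v+M\cdot+\beta\rangle$, and concludes $\dot I_t=\xi_{I_t}(M_t,\beta_t,v_t)$. This is a formal Lie-theoretic computation. It relies on right-invariance to move the derivative from the identity to time $t$, and it does not discuss the linear growth of $M_tx+\beta_t$.

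\textbf{Your route.} You start from the Eulerian field $u_t=v_t+M_t\cdot+\beta_t$ and take its flow $\Phi_t$ on $\R^d$. You recover the group picture by checking that $\Phi_t=A_t\psi_t+b_t$, where $\psi_t$ solves $\dot\psi_t=A_t^{-1}v_t(A_t\psi_t+b_t)$. This is exactly the paper's $\varphi_t$, and the computation checks out. The proof of \cref{flow_images} then applies with $u_t$ in place of $v_t$.

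\textbf{What each buys.} Your route gives rigour. It avoids differentiating the group action, yields the integrated form $I_t=I_S\circ\Phi_t^{-1}$, and isolates the real analytic issues in one place: the Lipschitz bound with an $L^1$-in-time constant, two-sided Jacobian bounds, and zero extension off $\Omega$. The paper's route is shorter. It also produces $\xi_I$ explicitly, which is what gets reused for the augmented action \eqref{eq:augmented_action} and the Hamiltonian \eqref{eq:hamil_affine}.

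\textbf{On uniqueness.} Only your existence half is needed for the statement as written. For the uniqueness half, your support hypothesis is necessary, not merely convenient. The field $u_t$ generally does not vanish on $\partial\Omega$; its affine part certainly does not. Wherever $u_t$ points inward, a transport equation posed on $\Omega$ alone has non-unique solutions, because arbitrary data can enter through the boundary. Requiring $\Supp(I_t)$ to stay compactly inside $\Omega$ rules this out and lets you argue on $\R^d$ via zero extension.
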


\begin{proof}
The time-dependent vector field $(M,\beta,v) \in L^2([0,1],\mathfrak{aff}(\R^d) \oplus C_0^k(\R^d ,\R^d))$ defines an unique flow $(A_t,b_t,\varphi_t)$ in $\Aff(\R^d) \ltimes \Diff_{C_0^k}(\R^d)$ via the resolution of following ODE:  
\begin{align*} \label{evolution_equation_affine}
(\dot{A}_t,\dot{b}_t,\dot{\varphi}_t) &= (M_t,\beta_t,v_t) (A_t,b_t,\varphi_t) \notag \\
&= \left( M_t A_t, M_t b_t + \beta_t, A_t^{-1} v_t(A_t\varphi_t + b_t) \right),
\end{align*}
with the initial conditions $(A_0,b_0,\varphi_0)=(I_d,0,\id)$.

Recall that the deformation of a source image $I_S$ by the action of $\Aff(\R^d) \ltimes \Diff_{C_0^k}(\R^d)$ defines the trajectory of images
\begin{equation*} \label{affine_action}
    I_t(x) :=((A_t,b_t,\varphi_t)\cdot I_S)(x) = I_S(\varphi^{-1}_t (A_t^{-1}(x-b_t)))
\end{equation*}
for $x \in \Omega$.
Therefore, differentiating this action with respect to $(A_t,b_t,\varphi_t)$ and evaluating at time $t=0$ yields the infinitesimal action 
\begin{equation*}
    \xi_I : \app{\mathfrak{aff}(\R^d) \oplus C_0^k(\Omega,\R^d)}{\mathcal{I}}{(M,\beta,v)}{x \mapsto -\langle \nabla I(x),v(x) + Mx + \beta \rangle}
\end{equation*}
The evolution equation of the deformed image $I_t$ directly comes from $\dot{I}_t = \xi_{I_t}(M_t,\beta_t,,v_t)$.
\end{proof}

\subsubsection{Matching problem and Hamiltonian equations}\label{sec:aff_match_problem}

Following the approach presented in \cite{mouhli2025decoupling}, we define an augmented space of images $\tilde{\mathcal{I}} = \Aff(\R^d) \times \mathcal{I}$ by enriching the standard space of images $\mathcal{I}$ with additional geometric information, such as the orientation and scale of the image. In particular, this formulation will allow us to later introduce a change of variables that factors out the affine motion from the deformed image, isolating the purely diffeomorphic component of the deformation.

We consider the action of $\Aff(\R^d) \ltimes \Diff_{C_0^k}(\R^d)$ on $\tilde{\mathcal{I}}$ defined by
\begin{equation} \label{eq:augmented_action}
    (A,b,\varphi)\cdot(\tilde{A},\tilde{b},I) = (A\tilde{A}, A \tilde{b} + b, I\circ\varphi^{-1} \circ A^{-1} \circ (\id - b)).
\end{equation}
By differentiating this action, we deduce the corresponding infinitesimal action
\[
\xi_{(\tilde{A},\tilde{b},I)}(M,\beta,v) = 
\big( M \tilde{A},\, M \tilde{b} + \beta,\, - \langle \nabla I, v + M\cdot + \beta \rangle \big),
\]
and the evolution equation of the deformed shape $(\tilde{A}_{t},\tilde{b}_{t},I_t)$
\begin{equation}
\label{augmented_flow}
\begin{cases}
    \dot{\tilde{A}}_{t} = M_t \tilde{A}_{t}, & \tilde{A}_{0} = I_d \\
    \dot{\tilde{b}}_{t} = M_t \tilde{b}_{t} + \beta_t, & \tilde{b}_{0} = 0 \\
    \dot{I}_t = - \langle \nabla I_t, v_t + M_t x + \beta_t \rangle, & I_0 = I_S
\end{cases}
\end{equation}

We define a new image registration problem as the following variational problem:

\begin{equation}
\label{eq:augmented_matching_aff1}
\inf_{(M,\beta,v)} 
J(M,\beta,v) =
\frac{1}{2} \int_0^1 \big( \| M_t \|^2 + \| \beta_t \|^2 + \| v_t \|_V^2 \big) dt
+ \mathcal{D}^{FA}_{\gamma}\big( \tilde{A}_{1}, \tilde{b}_{1}, I_1 \big),
\end{equation}
subject to the dynamics given by \cref{augmented_flow}. 
We define the data attachment term as the sum of two components
\begin{equation}
\mathcal{D}^{FA}_\gamma(\tilde{A}_1,\tilde{b}_1,I_1)=\gamma \Vert (\tilde{A}_1,\tilde{b}_1)\cdot I_S - I_T\Vert_{L^2}^2 + (1 -\gamma) \Vert I_1  - I_T\Vert_{L^2}^2 
\label{eq:attachment_term_affine}
\end{equation}
with $0\leq\gamma \leq 1$. The first component ensures that the affine transformation roughly aligns the source with the target. The second component guarantees that the total deformation of the template, induced by both the affine and diffeomorphic deformations, effectively matches the target. An additional advantage of using these two components, rather than the standard data attachment term $\mathcal{D}(I_1)=\Vert I_1 - I_T\Vert^2_{L^2}$, is that it relax the ending condition. 

Problem \eqref{eq:augmented_matching_aff1} can be interpreted as an optimal control problem, where the vector fields $(M,\beta,v)$ act as control variables as presented in \cref{sec:action_images}. By applying the PMP, we can derive geodesic equations through a Hamiltonian formulation of this problem. The geodesics correspond exactly to the critical points of the minimization functional. The Hamiltonian associated with the variational problem \eqref{eq:augmented_matching_aff1} is
    \begin{eqnarray}
H(\tilde{A},\tilde{b},I,p^A,p^b,p^I,M,\beta,v) &=&  (p^A \vert M\tilde{A}) + (p^b \vert M\tilde{b} + \beta) + \left(p^I\mid \langle-\nabla I,v + M\cdot + \beta\rangle\right)\nonumber\\
&-& \frac{1}{2}\left(  \Vert M\Vert^2 + \Vert \beta \Vert^2+\vert v\Vert_V^2 \right)      
\label{eq:hamil_affine}
\end{eqnarray}
where $(p^A,p^b,p^I)$ are covectors belonging to the cotangent bundle of the augmented space of images  $T^*(\Aff(\R^d) \times I)$
\begin{proposition}\label{prop:geod_eq_aff}
    The critical points of problem \eqref{eq:augmented_matching_aff1} satisfy the Hamiltonian equations associated with the Hamiltonian \eqref{eq:hamil_affine}:
\begin{equation}
\label{eq:Hamiltonian_full_system0}
\left\{
    \begin{aligned}
        &\dot{\tilde{A}}_t = M_t\tilde{A}_t \\
        &\dot{\tilde{b}}_t = M_t \tilde{b}_t + \beta_t \\
        &\dot{{I}}_t(x) = - \langle \nabla I_t(x), v_t (x) + M_tx + \beta_t \rangle \\
        &\dot{p}^A_t = -  M^{\top}_tp^A_t \\
        &\dot{p}^b_t = - M^{\top}_t p_t^b \\
        &\dot{p}^I_t = - \operatorname{div}(p^I_t(v_t + M_t x +\beta_t))
    \end{aligned}
\right. \quad \text{ where }
\quad
\left\{
    \begin{aligned}
        &M_t = - \int_{\Omega} p^I_t(x) \nabla I_t(x) x^\top dx + p^A_t \tilde{A}_t^{\top} + p^b_t \tilde{b}_t^{\top} \\
        &\beta_t = - \int_{\Omega} p^I_t(x) \nabla I_t(x) \,dx + p^b_t \\
        &v_t = -K_V(p^I_t \nabla I_t) \notag
    \end{aligned}
\right.
\end{equation}
\end{proposition}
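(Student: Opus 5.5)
We apply the Pontryagin Maximum Principle to the Hamiltonian \eqref{eq:hamil_affine}, exactly as was done for the standard problem \eqref{eq:lddmm_image} leading to \eqref{eq:standard_hamilt_eq}. The state is $q=(\tilde A,\tilde b,I)$ and the control is $u=(M,\beta,v)$. The dynamics \eqref{augmented_flow} is $\dot q=\xi_q(u)$, and $H(q,p,u)=(p\mid\xi_q(u))-\frac12\|u\|^2$. The PMP then gives three things: the state equations $\dot q=\partial_pH$, the costate equations $\dot p=-\partial_qH$, and the stationarity condition $\partial_uH=0$. The first three lines of \eqref{eq:Hamiltonian_full_system0} are simply $\partial_pH$ and reproduce \eqref{augmented_flow}, so nothing needs to be proved there.

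For the costates we differentiate $H$ in each state variable.
\begin{itemize}
\item In $\tilde A$, only $(p^A\mid M\tilde A)=\operatorname{tr}((p^A)^\top M\tilde A)$ depends on $\tilde A$. Its gradient is $M^\top p^A$, which gives $\dot p^A=-M^\top p^A$.
\item In $\tilde b$, the term $(p^b\mid M\tilde b)$ gives $\dot p^b=-M^\top p^b$ in the same way.
\item In $I$, we write $w=v+M\cdot+\beta$. The term $-\int p^I\langle\nabla I,w\rangle$ is linear in $I$. Integrating by parts gives $\int I\operatorname{div}(p^Iw)$, so $\partial_IH=\operatorname{div}(p^Iw)$ and hence $\dot p^I=-\operatorname{div}(p^I(v+M x+\beta))$.
\end{itemize}

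For the controls we take variations of $H$.
\begin{itemize}
\item Varying $v$ gives $-p^I\nabla I-K_V^{-1}v=0$, i.e.\ $v=-K_V(p^I\nabla I)$, as in \eqref{eq:standard_hamilt_eq}.
\item Varying $\beta$ gives $p^b-\int_\Omega p^I\nabla I\,dx-\beta=0$.
\item Varying $M$ uses the Frobenius inner product. The pairing $(p^A\mid \delta M\tilde A)$ contributes $p^A\tilde A^\top$, and $(p^b\mid\delta M\tilde b)$ contributes $p^b\tilde b^\top$. The image term $-\int p^I\langle\nabla I(x),\delta M x\rangle\,dx$ contributes $-\int p^I\nabla I\,x^\top dx$. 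Setting the derivative to zero yields the stated formula for $M_t$.
\end{itemize}

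The main obstacle is justifying the PMP itself in this setting. As emphasised at the start of \cref{sec:affine_motions}, the action on $L^2$ images is not differentiable, and $\Omega$ is not stable under affine motions. Moreover, the vector field $x\mapsto Mx+\beta$ is not in $C_0^k$, so the standard results of \cite{ARGUILLERE2015139} do not apply directly. I would handle this as follows.
\begin{enumerate}
\item Restrict to $I\in H^1$, and assume the supports stay inside $\Omega$ as allowed by \cref{prop:transport_support}. This makes the boundary terms in the integration by parts vanish.
\item Derive the equations at the level of critical points, as the statement asks. To do so, compute the first variation of $J$ along the constraint using an adjoint (Lagrange multiplier) argument rather than invoking a general PMP theorem.
\item Justify the costate equation for $p^I$ weakly, by testing against $h\in C_0^\infty$. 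This mirrors the proof of \cref{flow_images}.
\item For the affine part, the fields grow only linearly and $I$ has support compactly inside $\Omega$, so the moments $\int p^I\nabla I\,x^\top$ are finite. The linear flow is then well defined.
\end{enumerate}
The endpoint conditions $p^\bullet_1=-\partial_\bullet\mathcal D^{FA}_\gamma$ then come out of the same variation.
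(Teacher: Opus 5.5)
Your proposal is correct and follows essentially the same route as the paper. Both apply the PMP to the Hamiltonian \eqref{eq:hamil_affine}, obtain the optimal controls from $\partial_M H=\partial_\beta H=\partial_v H=0$ via trace/Frobenius identities, read off $\dot p^A$ and $\dot p^b$ directly, and derive $\dot p^I=-\operatorname{div}\bigl(p^I(v+M\cdot+\beta)\bigr)$ by integrating by parts, with the boundary term vanishing because $\operatorname{Supp}(I)\subset\Omega$. Your remarks on how to justify the PMP rigorously here (non-differentiable $L^2$ action, the field $x\mapsto Mx+\beta$ not lying in $C_0^k$) go beyond the paper, which applies the PMP formally.
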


\begin{proof}
    Let us consider the Hamiltonian $H$ defined in \cref{eq:hamil_affine}.
The PMP  states that the critical points satisfy the following Hamiltonian equations:
\begin{equation}
\left\{
    \begin{aligned}
        &(\dot{\tilde{A}}_{t},\dot{\tilde{b}}_{t},\dot{I}_t,\dot{p}^A_t,\dot{p}^b_t,\dot{p}^I_t)=(\partial_{p^A} H,\partial_{p^b} H,\partial_{p^I} H,- \partial_{\tilde{A}} H,- \partial_{\tilde{b}} H,- \partial_I H) \\
        &(\partial_M H , \partial_\beta H,\partial_v H)=0 
    \end{aligned}
\right.
\end{equation}
First, let us explicitly expand the terms composing the first part of the Hamiltonian: 
\begin{align*}
    &(p^A \vert M\tilde{A}) = \operatorname{tr}((p^A)^\top M\tilde{A}) \\
    &(p^b \vert M\tilde{b} + \beta)=\langle p^b,M\tilde{b} + \beta \rangle \\
    &\left(p^I\mid \langle-\nabla I,v + M\cdot + \beta\rangle\right) =- \int_{\Omega} p^I(x)\langle\nabla I(x),v(x) + Mx + \beta\rangle dx.
\end{align*}

Consequently, taking the partial derivatives of the Hamiltonian with respect to the control variables $(M,\beta,v)$ yields:
\begin{equation}
\left\{
    \begin{aligned}
        & \partial_MH = 0 \Longleftrightarrow M=p^A\tilde{A}^\top+p^b\tilde{b}^\top -\int_{\Omega}p^I(x) \nabla I(x)x^{\top}dx , \\
        & \partial_{\beta}H = 0 \Longleftrightarrow \beta= p^b - \int_{\Omega}p^I(x)\nabla I(x)dx \\
        & \partial_{v}H =0 \Longleftrightarrow v = -K_V(p^I\nabla I)
    \end{aligned}
\right.
\end{equation}
which directly provides the equations for the optimal controls. 

The differential equations of $p^A$ and $p^b$ follow directly from the partial derivatives of $H$. We now focus on the equation for $p^I$ which requires more precisions. Using the product rule for the divergence operator, it follows that $\langle \nabla I , (v + M\cdot + \beta)p^I \rangle = \operatorname{div}((v + M\cdot + \beta)p^I I) - I \operatorname{div} ((v + M\cdot + \beta)p^I)$. Then, the Hamiltonian equation corresponding to $p^I$ gives:

\begin{align*}
    \dot{p}^I_t &= -\partial_{I} H \\
    &= \partial_I \int_{\Omega} p^I(x)\langle\nabla I(x),v(x) + Mx + \beta\rangle dx  \\
    &= \partial_I \int_\Omega \operatorname{div}((v + M\cdot + \beta)p^I I) (x) dx - \partial_I \int_\Omega  I(x)\operatorname{div} ((v + M\cdot + \beta)p^I)(x) dx
\end{align*}

Since the image vanishes outside its support $\operatorname{Supp}(I) \subset \Omega$, the divergence theorem implies that $\int_\Omega \operatorname{div}((v + M\cdot + \beta)p^I I)(x) dx =0$. Consequently, computing the derivative with respect to $I$ via the $L^2$ inner product identification yields:
\begin{align*}
    \dot{p}^I_t &=  - \partial_I \int_\Omega  I(x) \operatorname{div} ((v + M\cdot + \beta)p^I)(x) dx \\
    &= - \partial_I \big(I \mid\operatorname{div}((v+M\cdot+\beta)p^I)\big) \\
    &= -\operatorname{div}((v+M\cdot+\beta)p^I) 
\end{align*}
\end{proof}

The previous Hamiltonian can be simplified by replacing the controls $(M,\beta,v)$ by their optimal expression:
\begin{eqnarray}
H(\tilde{A},\tilde{b},I,p^A,p^b,p^I) &=& \frac{1}{2}  \left\Vert -\int_{\Omega} p^I(x) \nabla I(x) x^\top dx  + p^A \tilde{A}^{\top} + p^b \tilde{b}^{\top} \right\Vert^2  \\
    &+&\frac{1}{2} \left\Vert -\int_{\Omega} p^I(x) \nabla I(x) \,dx  + p^b\right\Vert^2 
    + \frac{1}{2} \left\Vert K_V(p^I \nabla I) \right\Vert^2       \notag
\end{eqnarray}
Consequently, the minimization over control variables $(M,\beta,v)$ in \cref{eq:augmented_matching} can be rewritten as a minimization over covectors.
\begin{eqnarray}
    \label{eq:lddmm_image2}
\inf_{(p^A,p^\beta,p^I)} 
J(p^A,p^b,p^I)
&=& \frac{1}{2} \int_0^1 \big( \| M_t \|^2 + \| \beta_t \|^2 + \| v_t \|_V^2 \big) dt
+ \mathcal{D}^{FA}_{\gamma}\big( \tilde{A}_{1}, \tilde{b}_{1}, I_1 \big), \\
     \text{ s.t }& &  \left\{
        \begin{array}{ll}
\text{Eq. from \cref{prop:geod_eq_aff}} \\
 I_0=I_S        
        \end{array} 
        \right.\notag
     \notag
\end{eqnarray}

Following \cite{mouhli2025decoupling}, we proceed to a change of variable to define a new image:
\begin{equation}\label{eq:change_variable}
    \tilde{I}(x):=(\tilde{A},\tilde{b})^{-1} \cdot I (x)= I(\tilde{A}x+\tilde{b})
\end{equation}
This change of variable allows to separate the contribution of affine and diffeomorphic motions in the global deformation since it allows us to represent the image in its intrinsic frame, factoring out the action of the affine motions. Indeed, the dynamic of the new image is
\begin{equation}
    \dot{\tilde{I}}_t 
    = - \langle \nabla\tilde I_t(x),\tilde{A}_t^{-1}v(\tilde{A}_tx +\tilde{b}_t)\rangle \\
\end{equation}
which only involves the diffeomorphic vector field, in contrast to the dynamic of $I$ previously established in \cref{eq:dyn_I}. Therefore, we define a new variational problem we are interested in is

\begin{eqnarray}
\label{eq:augmented_matching}
\inf_{(M,\beta,v)} 
J(M,\beta,v) &=&
\frac{1}{2} \int_0^1 \big( \| M_t \|^2 + \| \beta_t \|^2 + \| v_t \|_V^2 \big) dt
+ \tilde{\mathcal{D}}^{FA}_{\gamma}\big( \tilde{A}_{1}, \tilde{b}_{1}, \tilde{I}_1 \big), \\
     \text{ s.t }& &  \left\{
        \begin{array}{ll}
        \dot{\tilde{A}}_t = M_t\tilde{A}_t \\
        \dot{\tilde{b}}_t = M_t \tilde{b}_t + \beta_t \\
        \dot{\tilde{I}}_t(x) = - \langle \nabla \tilde{I}_t(x), \tilde{A}_t^{-1}v_t(\tilde{A}_t x+\tilde{b}_t) \rangle, \quad 
 I_0=I_S        
        \end{array} 
        \right.\notag
     \notag
\end{eqnarray}
with the data attachment term  $\tilde{D}^{FA}_\gamma(\tilde{A}_1,\tilde{b}_1,\tilde{I}_1)= D_{\gamma}^{FA}(\tilde{A}_1,\tilde{b}_1,(\tilde{A}_1,\tilde{b}_1)\cdot\tilde{I}_1).$
The Hamiltonian associated with this new variational problem becomes
\begin{equation}
\label{eq:pre_hamil_decoupled}
\begin{aligned}
H(\tilde{A},\tilde{b},\tilde{I},p^A,p^b,\tilde{p}^I,M,\beta,v) &= ( p^A \mid M \tilde{A} ) + ( p^b \mid M \tilde{b} + \beta) + \big( \tilde{p}^I \mid \langle -  \nabla \tilde{I}, \tilde{A}^{-1}v(\tilde{A} \cdot+\tilde{b}) \rangle \big) \\
    &\quad - \frac{1}{2} \big( \Vert M \Vert^2 + \Vert \beta \Vert^2 + \Vert v \Vert_V^2 \big),
\end{aligned}
\end{equation}

Similarly to the transformation applied to images in \cref{eq:change_variable}, we introduce a change of variable for the vector fields:
\begin{equation}
    \label{eq:vtilde}
    \tilde{v}_t(x) = \tilde{A}_t^{-1}v_t(\tilde{A}_tx+\tilde{b}_t).
\end{equation}
Then, the Hamiltonian equations can be simplified as established in the next proposition.

\begin{proposition}\label{prop:geod_eq_decoupled}
    The  critical points associated with the variational problem \eqref{eq:augmented_matching}, expressed with the vector field $\tilde{v}_t$, satisfy the Hamiltonian equations
\begin{equation}
\label{eq:Hamiltonian_full_system_decoupled_symp}
\left\{
    \begin{aligned}
        &\dot{\tilde{A}}_t = M_t\tilde{A}_t \\
        &\dot{\tilde{b}}_t = M_t \tilde{b}_t + \beta_t \\
        &\dot{\tilde{I}}_t = - \langle \nabla \tilde{I}_t, \tilde{v}_t \rangle \\
        &\dot{p}^A_t = - M_t^\top p_t^A - \tilde{A}_t^{-\top} \left( \int_{\Omega}  \ \tilde{p}_t^I(x) \nabla \tilde{I}_t(x) \tilde{v}_t(x)^\top dx -  \int_{\Omega} d\tilde{v}_t(x)^\top \tilde{p}_t^I(x)\nabla \tilde{I}_t(x) x^\top dx \right)\\
        &\dot{p}^b_t = - M_t^\top p_t^b +  \tilde{A}_t^{-\top}\int_\Omega  d\tilde{v}_t(x)^\top \tilde{p}_t^I(x) \nabla \tilde{I}_t(x)  dx  \\
        &\dot{\tilde{p}}^I_t = - \operatorname{div}(\tilde{p}_t^I\tilde{v}_t)  
    \end{aligned}
\right.
\end{equation}
where the optimal controls are given by
\begin{equation}
\label{eq:optimal_controls_decoupled}
\left\{
    \begin{aligned}
        &M_t = p_t^A \tilde{A}_t^\top + p_t^b \tilde{b}_t^\top \\
        &\beta_t =  p_t^b \\
        &\tilde{v}_t = -K_{\tilde{V} }(\tilde{p}_t^I \nabla \tilde{I}_t )
    \end{aligned}
\right.
\end{equation}
with $K_{\tilde{V}}$ the Riesz isomorphism generated by the kernel $ k_{\tilde{V}}(x,y)=A^{-1} k_V(Ax+b,Ay+b)A^{-\top}$
\end{proposition}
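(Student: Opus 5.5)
We apply the PMP to the Hamiltonian \eqref{eq:pre_hamil_decoupled}, exactly as in the proof of \cref{prop:geod_eq_aff}, and rewrite every occurrence of $v$ through $\tilde v$ defined in \eqref{eq:vtilde}. The first preparatory step is to reparametrise the control. For fixed $(\tilde A,\tilde b)$ the map $v\mapsto \tilde v=\tilde A^{-1}v(\tilde A\cdot+\tilde b)$ is a linear isomorphism from $V$ onto a space $\tilde V$. We equip $\tilde V$ with the transported norm $\Vert\tilde v\Vert_{\tilde V}:=\Vert v\Vert_V$. A direct check on the reproducing property shows that $\tilde V$ is an RKHS with kernel $k_{\tilde V}(x,y)=\tilde A^{-1}k_V(\tilde Ax+\tilde b,\tilde Ay+\tilde b)\tilde A^{-\top}$. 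Concretely, one verifies $\langle \tilde k(\cdot,y)a,\tilde w\rangle_{\tilde V}=a^\top\tilde w(y)$ by pulling back to $V$.

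The Hamiltonian then reads $(p^A\mid M\tilde A)+(p^b\mid M\tilde b+\beta)-(\tilde p^I\mid\langle\nabla\tilde I,\tilde v\rangle)-\tfrac12(\Vert M\Vert^2+\Vert\beta\Vert^2+\Vert\tilde v\Vert_{\tilde V}^2)$. Setting $\partial_M H=0$ and $\partial_\beta H=0$ gives $M=p^A\tilde A^\top+p^b\tilde b^\top$ and $\beta=p^b$. The image term no longer contains $M$ or $\beta$, which is precisely why the integral terms of \cref{prop:geod_eq_aff} disappear. Setting $\partial_{\tilde v}H=0$ gives $\tilde v=-K_{\tilde V}(\tilde p^I\nabla\tilde I)$. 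The forward equations for $\tilde A,\tilde b,\tilde I$ are immediate. The equation for $\tilde p^I$ is derived verbatim as in \cref{prop:geod_eq_aff}, by integration by parts using the support assumption, and gives $\dot{\tilde p}^I=-\operatorname{div}(\tilde p^I\tilde v)$.

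The main obstacle is the pair of costate equations $\dot p^A=-\partial_{\tilde A}H$ and $\dot p^b=-\partial_{\tilde b}H$. There the derivative must be taken with $v$, not $\tilde v$, held fixed, since $v$ is the genuine control in \eqref{eq:augmented_matching}. One subtlety is that the $\tilde V$ norm also depends on $\tilde A,\tilde b$; in the $v$-formulation, however, $\Vert v\Vert_V$ does not, so working with $v$ fixed avoids differentiating the kernel.

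I would first compute the variation of $\tilde v(x)=\tilde A^{-1}v(\tilde Ax+\tilde b)$ in the direction $\delta\tilde A$. This gives $-\tilde A^{-1}\delta\tilde A\,\tilde v(x)+\tilde A^{-1}dv(\tilde Ax+\tilde b)\delta\tilde A\,x$. Next I would use $dv(\tilde Ax+\tilde b)=\tilde A\,d\tilde v(x)\tilde A^{-1}$, so that the second term becomes $d\tilde v(x)\tilde A^{-1}\delta\tilde A\,x$. Pairing with $-\tilde p^I\nabla\tilde I$, integrating, and taking the trace-duality adjoint in $\delta\tilde A$ should yield
\[
-\tilde A^{-\top}\Bigl(\int_\Omega\tilde p^I\nabla\tilde I\,\tilde v^\top dx-\int_\Omega d\tilde v^\top\tilde p^I\nabla\tilde I\,x^\top dx\Bigr).
\]
Together with $-M^\top p^A$ from the first term, this is the stated equation. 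The $\tilde b$-variation is analogous: $\delta\tilde v=d\tilde v\,\tilde A^{-1}\delta\tilde b$, which produces $+\tilde A^{-\top}\int_\Omega d\tilde v^\top\tilde p^I\nabla\tilde I\,dx$ with the sign as stated, plus $-M^\top p^b$.

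Care is needed with signs and transposes in these adjoint computations. I would double-check the result by verifying conservation of $H$ along the flow, or by comparing with the point-cloud formulas of \cite{mouhli2025decoupling}.
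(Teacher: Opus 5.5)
Your proposal is correct and takes the same route as the paper, whose proof just applies the PMP to the Hamiltonian \eqref{eq:pre_hamil_decoupled} as in \cref{prop:geod_eq_aff}. You also supply the details the paper leaves out: the transported kernel $k_{\tilde V}$, and the $\tilde A$- and $\tilde b$-derivatives of the image term taken at fixed $v$, which, with $dv(\tilde Ax+\tilde b)=\tilde A\,d\tilde v(x)\tilde A^{-1}$, give exactly the stated equations for $\dot p^A$ and $\dot p^b$, signs and transposes included.
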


 \begin{proof}
    The proof is similar to the one given in \cref{prop:geod_eq_aff}
\end{proof}

This change of variables yields new geodesic equations compared to the ones given in \cref{prop:geod_eq_aff}. First, in the original setting, the image dynamic $\dot{I}_t$ was driven by the vector field $x \mapsto v_t(x) + M_t x + \beta_t$ that combines both affine and unstructured diffeomorphic deformations. After the change of variables, the new image $\tilde{I}_t$ evolves exclusively under the action of the new vector field $\tilde{v}_t(x) = \tilde{A}_t^{-1}v_t(\tilde{A}_tx+\tilde{b}_t)$, effectively removing the explicit dependence on the affine controls from the image evolution equation. On the other hand, the optimal controls associated with the affine part, namely $M$ and $\beta$, do not depend directly on the image $\tilde{I}$, but only on their associated covectors $p^A$ and $p^b$. 
Consequently, the computational complexity shifts from the controls to the evolution equations of these affine covectors. While the differential equation for $p^A_t$ might appear computationally costly due to the spatial integration over $\Omega$ and the presence of the Jacobian $d\tilde{v}_t$, it is efficient in practice. The spatial differential $d\tilde{v}_t(x)$ can be easily evaluated using finite differences, yielding a $d \times d$ matrix for each of the $N$ pixels. The integral thus reduces to straightforward pixel-wise multiplications of $d \times d$ matrices, followed by a simple global sum over the image domain. 
Note that if $k_V$ is a translation-invariant kernel (such as a Gaussian), the expression of the transformed kernel $k_{\tilde{V}}$ naturally simplifies, since $k_V(\tilde{A}x+\tilde{b}, \tilde{A}y+\tilde{b}) = k_V(\tilde{A}x, \tilde{A}y)$.

By replacing the controls $(M,\beta,v)$ with their optimal expressions the Hamiltonian \eqref{eq:pre_hamil_decoupled} simplifies to
\begin{equation}
\begin{aligned}
    H(\tilde{A},\tilde{b},\tilde{I},p^A,p^b,\tilde{p}^I) &= \frac{1}{2} \Vert p^A\tilde{A}^\top + p^b \tilde{b}^{\top} \Vert^2 + \frac{1}{2}\Vert p^b \Vert^2 + \frac{1}{2} \Vert K_{\tilde{V}} (\tilde{p}_t^I \nabla \tilde{I}_t )\Vert_{\tilde{V}}^2
\end{aligned}
\end{equation}
Consequently, the minimization over the control variables can be replaced by a minimization over the initial covectors to perform geodesic shooting:
\begin{eqnarray}
    \label{eq:lddmm_image_decoupled}
\inf_{(p^A_0,p^\beta_0,\tilde{p}^I_0)} J(p^A_0,p^b_0,\tilde{p}^I_0) &=& \int_0^1 H\big(\tilde{A}_t,\tilde{b}_t,\tilde{I}_t,p^A_t,p^b_t,\tilde{p}^I_t\big) dt + \tilde{\mathcal{D}}_{\gamma}\big( \tilde{A}_{1}, \tilde{b}_{1}, \tilde{I}_1 \big), \\
     \text{ s.t }& &  \left\{
        \begin{array}{ll}
\text{Eq. from \cref{prop:geod_eq_decoupled}} \\
 \tilde{I}_0=I_S, \quad \tilde{A}_0 = I_d, \quad \tilde{b}_0 = 0      
        \end{array} 
        \right.\notag
\end{eqnarray}

\subsection{Anisotropic scaling and isometries}\label{sec:decomp_affine}

This section is dedicated to the particular case where we restrict affine motions  to translations, rotations and anisotropic scaling . We refer to this framework as the \textit{Decomposed Affine} (DA) model, in opposition with the \textit{Full Affine} presented in \cref{sec:affine_motions}.

\subsubsection{First definitions and properties}

In \cref{sec:affine_motions}, we  considered the group of affine motions $\Aff(\R^d) := \operatorname{GL}_d \ltimes \R^d$ acting on images via $(A,b) \cdot I (x) = I(A^{-1}(x-b))$. Depending on the application, it is often more relevant to decompose the affine motion into distinct, interpretable sub-components such as rotations, translations, or anisotropic scalings. Indeed, by polar decomposition, an invertible matrix $A \in \operatorname{GL}_d$ admits a decomposition $A = R \Sigma$ with $(R,\Sigma) \in O_d(\R) \times S_d^{++}$. In our framework, we restrict the orthogonal component $O_d(\R)$ to rotations $ R \in SO_d(\R)$ and we constrain the symmetric positive-definite matrices to the subgroup of diagonal matrices with strictly positive entries. This last component defines the space of anisotropic scalings, which can be parameterized by a vector $\alpha\in\R^d_{>0}$ that acts on $\R^d$ via the diagonal matrix $D_{\alpha}=\operatorname{diag}(\alpha_1,...,\alpha_d)$:
\[
\D{\alpha} x = (\alpha_1x_1,\dots \alpha_dx_d)^{\top}
\]
Finally, we combine these group of deformations to define the extended group of anisotropic scalings and isometries
\[
\D{\alpha}\operatorname{-Isom}(\R^d) \coloneq \R_{>0}^d\times SO_d\times\R^d
\]
equipped with the direct product group law
\[
\left\{\begin{array}{l}
    (\alpha,R,T)\cdot(\alpha',R',T') = \left(\alpha\alpha',RR',T'+T\right)  \\
      (\alpha,R,T,)^{-1}=(\alpha^{-1},R^\top,-T).
\end{array} \right.
\]
\begin{remark} 
    The group of deformations $\D{\alpha}\operatorname{-Isom}(\R^d) $ is a direct group and not a semidirect group contrary to the group defined in \cref{sec:affine_motions}. Indeed, natural actions on $\R^d$ of the group of anisotropic scalings and the group of isometries are not directly compatible, as we will see.    
\end{remark}

A naive action of this group on $\R^d$ could be expressed as $ (\alpha,R,T)\cdot x = RD_\alpha x + T$. However, this fails to form a valid group action because the compatibility condition does not hold, that is $((\alpha,R,T)(\alpha',R',T')) \cdot x \ne (\alpha,R,T) \cdot ((\alpha',R',T') \cdot x)$. This failure arises from the non-commutativity of anisotropic scalings and rotation matrices, as both are basis-dependent. Rather, this group naturally acts on an augmented space $\R^R\times SO_d$ that keeps track of the orthogonal basis :
\[
 (\alpha,R)\cdot (x,R_x) = (RR_xD_{\alpha}R_x^\top x,RR_x)
\]
where $x\in \R^d$, and $R_x\in SO_d$ encoding the orthogonal frame associated with $x$.
Following this idea, we construct a valid action of $D_{\alpha}-\operatorname{Isom}(\R^d)$ on an augmented space of images, as done in \cref{sec:aff_match_problem}. We assume that any image $I$ is also enriched with the given data of an intrinsic orthogonal basis, encoded by a matrix $R_I\in \operatorname{SO}_d$, and an intrinsic origin $O_I \in \R^d$. This provides a local coordinate frame attached to the image, in order to perform rigid and scaling deformations. Thus, the anisotropic scaling will stretch each axis of this basis, and the rotation will be centered around $O_I$. We therefore define the action of $(\alpha,R,T)$ on the augmented image $(I,R_I,O_I) \in (\mathcal{I},SO_d,\R^d)$, by
\begin{equation}
\label{rigid_action_I}
    (\alpha,R,T)\cdot (I,R_I, O_I)= (I',RR_I,O_I+T)
\end{equation}
where the transported image $I'$ is given by
\begin{equation}\label{eq:transport_image2}
I'(x) = \left\{\begin{array}{ll}
     I\left(R_I\D{\alpha^{-1}}R_I^\top R^\top (x-O_I-T) + O_I \right) &\text{if } R_I\D{\alpha^{-1}}R_I^\top R^\top (x-O_I-T) + O_I \in \Omega \\
     0 &\text{otherwise}
\end{array}
\right.    
\end{equation}

This still fails to define a valid group action as the image domain $\Omega \subset \R^d$ is not necessarily stable by the action of isometries and scaling, similarly to \cref{sec:affine_motions:action}. Consequently, some information may be lost if the deformations pushes the image beyond the boundaries of $\Omega$. However, given that the support of images is strictly contained within $\Omega$ we retrieve \cref{prop:transport_support} giving that we can affinely transport images without loss of information. 

\begin{proposition}[Action property]
    Let $(I,R_I,O_I)\in \mathcal{I}\times SO_d\times\R^d$, and denote by $\operatorname{Supp}(I)\subset \Omega$ its support. Let $(\alpha,R,T),(\alpha',R',T')\in \D{\alpha}\operatorname{-Isom}(\R^d)$, and define 
    \[
    \begin{array}{l}
         S_1 = R'R_I\D{\alpha'}R_I^\top(\operatorname{Supp}(I)-O_I) +O_I+T'\\
         S_2 =  RR'R_I\D{\alpha}R_I^\top R'^\top(S_1-O_I-T') +O_I+T'+T  \\
    \end{array}
    \]
    We also suppose that 
    \begin{equation}
    \label{action_cond}
        S_1,S_2\subset \Omega.
    \end{equation}
    Then we get that
    \begin{equation}
    \label{action_prop}
    (\alpha,R,T)\cdot\left((\alpha',R',T')\cdot(I,R_I,O_I) \right)= (\alpha\alpha',RR',T'+T)\cdot (I,R_I,O_I)
    \end{equation}
    and the set $S_2$ is exactly the support of the transported image.
\end{proposition}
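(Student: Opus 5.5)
The plan is to follow the proof of \cref{prop:transport_support}: first compute the support of a single transported image, then compare the two sides of \eqref{action_prop} pointwise.

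\textbf{Step 1 (support of one transport).} From \eqref{eq:transport_image2}, $I'(x)\neq 0$ if and only if the preimage $y=R_I\D{\alpha^{-1}}R_I^\top R^\top(x-O_I-T)+O_I$ lies in $\Omega$ and $I(y)\ne0$. Inverting the map, this holds if and only if
\[
x\in RR_I\D{\alpha}R_I^\top(\Supp(I)-O_I)+O_I+T .
\]
Here I use $R_I^\top R_I=I_d$ and $\D{\alpha}\D{\alpha^{-1}}=I_d$. I will also use that $R$ commutes with nothing in general, but the factor $RR_I\D{\alpha}R_I^\top$ is exactly the inverse of $R_I\D{\alpha^{-1}}R_I^\top R^\top$. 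Applied to $(\alpha',R',T')$, this gives
\[
(\alpha',R',T')\cdot(I,R_I,O_I)=(I',R'R_I,O_I+T'),\qquad \Supp(I')=S_1 .
\]
The assumption $S_1\subset\Omega$ means that no mass of $I$ is cut off by the ``otherwise $0$'' branch.

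\textbf{Step 2 (composition).} Apply $(\alpha,R,T)$ to the augmented image $(I',R'R_I,O_I+T')$. Its intrinsic frame is now $R_{I'}=R'R_I$ and its origin is $O_{I'}=O_I+T'$. For $x\in\Omega$ whose preimages stay in $\Omega$, the second transport reads
\[
I''(x)=I'(y),\qquad y=R'R_I\D{\alpha^{-1}}R_I^\top R'^\top R^\top(x-O_I-T'-T)+O_I+T' .
\]
Substituting the formula for $I'$ requires $y-O_I-T'$. Using $R'^\top R'=I_d$ and $R_I^\top R_I=I_d$, the argument of $I$ becomes
\[
R_I\D{\alpha'^{-1}}R_I^\top R'^\top R'R_I\D{\alpha^{-1}}R_I^\top R'^\top R^\top(x-O_I-T-T')+O_I ,
\]
which simplifies to
\[
R_I\D{(\alpha\alpha')^{-1}}R_I^\top(RR')^\top\bigl(x-O_I-(T+T')\bigr)+O_I .
\]
The key fact is that diagonal matrices commute, so $\D{\alpha'^{-1}}\D{\alpha^{-1}}=\D{(\alpha\alpha')^{-1}}$. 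The resulting expression is exactly the transport of $I$ by $(\alpha\alpha',RR',T+T')$ in the original frame $R_I$ and origin $O_I$. The frame and origin components agree directly: $RR'R_I$ and $O_I+T'+T$ on both sides. This proves \eqref{action_prop}, away from the zero branches.

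\textbf{Step 3 (the zero branches, the main obstacle).} I must check that the cut-off clauses ``$0$ otherwise'' produce the same values on both sides. Take a point $x$ where either side uses the zero branch, at either stage. I will show both sides vanish there, using the support characterisation from Step 1:
\begin{itemize}
\item the left side is nonzero only on $RR'R_I\D{\alpha}R_I^\top R'^\top(S_1-O_I-T')+O_I+T'+T=S_2$;
\item by the Step 1 computation applied to the composite element, the right side is nonzero only on $RR'R_I\D{\alpha\alpha'}R_I^\top(\Supp(I)-O_I)+O_I+T+T'$.
\end{itemize}
Substituting the definition of $S_1$ into the first set and again using commutativity of diagonal matrices shows that these two sets coincide. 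Because $S_1,S_2\subset\Omega$, every point of this common set has all its intermediate preimages in $\Omega$, so Step 2 applies there. Outside the common set, both sides vanish. Hence the two images agree everywhere, and $S_2$ is the support of the transported image.
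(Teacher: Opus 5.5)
Your proof is correct and follows essentially the same route as the paper. The paper parametrizes points forward ($x\mapsto y\mapsto z$) and shows $z=RR'R_I\D{\alpha\alpha'}R_I^\top(x-O_I)+O_I+T'+T$, whereas you compose the inverse maps inside the argument of $I$; the key cancellations ($R'^\top R'=R_I^\top R_I=I_d$, $\D{\alpha'^{-1}}\D{\alpha^{-1}}=\D{(\alpha\alpha')^{-1}}$) and the use of $S_1,S_2\subset\Omega$ to track supports are the same, and your Step 3 simply handles the ``$0$ otherwise'' branches more explicitly than the paper does.
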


\begin{proof}
    As in \cref{prop:transport_support}, the condition \eqref{action_cond} is exactly the necessary condition to get a complete rigid transport of the image $I$ without any loss of information. We denote $I'\in \mathcal{I}$ the image such that 
    \[
    (\alpha',R',T')\cdot (I,R_I,O_I) = (I',R'R_I,O_I+T').
    \]
    and $I''\in\mathcal{I}$ such that 
    \[
    (\alpha,R,T)\cdot (I',R'R_I,O_I+T') = (I'',RR'R_I,O_I+T'+T).
    \]
    Let $z\in \Omega$, and $x,y\in \R^d$ such that 
    \[
    \left\{\begin{array}{l}
    z=RR'R_I\D{\alpha}R_I^\top R'^\top(y-O_I-T')+O_I+T'+T \\
    y=R'R_I\D{\alpha'}R_I^\top(x-O_I)+O_I+T'.
    \end{array}\right.
    \]
    In particular we get that $x\in \operatorname{Supp}(I)$ if and only if $y\in S_1$, and by hypothesis \eqref{action_cond},
    \[
    I'(y) = I(x)
    \] which also means that $S_1=\operatorname{Supp}(I')$. 
    Similarly, $z\in S_2$ if and only if $y\in S_1=\operatorname{Supp}(I')$, if and only if $x\in \operatorname{Supp}(I)$, and therefore
    \[
    I''(z)=I'(y)=I(x)
    \]
    This also means in particular that $S_2=\operatorname{Supp}(I^{\prime\prime})$. Finally, we have 
    \begin{align*}
        z &= RR'R_I\D{\alpha}R_I^\top R'^\top(y-O_I-T')+O_I+T'+T \\
        &= RR'R_I\D{\alpha}R_I^\top R'^\top\left(\left(R'R_I\D{\alpha'}R_I^\top(x-O_I)+O_I+T'\right) -O_I-T'\right)+O_I+T'+T \\
        &= RR'R_I\D{\alpha}R_I^\top R'^\top\left(R'R_I\D{\alpha'}R_I^\top(x-O_I)\right)+O_I+T'+T\\
        &= RR'R_I\D{\alpha\alpha'}R_I^\top (x-O_I) + O_I + T' + T.
    \end{align*}
   so that we get
   \[
   (\alpha\alpha',RR',T'+T)\cdot (I, R_I,O_I) = (I'', RR'R_I,O_I+T'+T).
   \]
\end{proof}
To enrich this model with diffeomorphic deformation, we cannot consider the semidirect product $\D{\alpha}\operatorname{-Isom}(\R^d)\ltimes \operatorname{Diff}_{C_0^k}(\R^d)$ since, as mentioned earlier, there is no action of $\D{\alpha}\operatorname{-Isom}(\R^d)$ on $\R^d$ and so no conjugation operation for the semidirect group law. Therefore, we consider the direct product group:
\[
\D{\alpha}\operatorname{-Isom}(\R^d)\times \operatorname{Diff}_{C_0^k}(\R^d)
\]
equipped with group law
\[
\left\{\begin{array}{l}
    (\alpha,R,T,\varphi)\cdot(\alpha',R',T',\varphi') = \left(\alpha\alpha',RR',T'+T,\varphi\circ\varphi'\right)  \\
      (\alpha,R,T,\varphi)^{-1}=(\alpha^{-1},R^\top,-T,\varphi^{-1}).
\end{array} \right.
\]
This group acts on the augmented space of images $\mathcal{I} \times SO_d\times \R^d$ via
\begin{equation}
    (\alpha,R,T,\varphi)\cdot (R_I,O_I,I) = (RR_I,O_I+T,I\circ\varphi^{-1}).
\end{equation}
Therefore, the evolution equation of the template image $(R_{I_S},O_{I_S},I_S)$ is deduced from the infinitesimal action
\begin{equation}
        \left\{\begin{array}{l}
    \dot{R}_t=A_tR_t \\
    \dot{O}_t = \beta_t \\
    \dot{I}_t = - \langle \nabla I_t,v_t\rangle\\
    (R_0,O_0,I_0)=(R_{I_S},O_{I_S},I_S)
    \end{array}\right.
\end{equation}
where $(A,\beta,v) \in L^2([0,1],\operatorname{Skew}_d \oplus \R^d \oplus V)$.

Observe that the image is deformed exclusively by the diffeomorphism. Indeed, if we wanted every deformation mode to act on the image, the natural action would be:
\begin{equation}
\label{eq:joint_action_total}
(\alpha,R,T, \varphi) \cdot ( R_I, O_I,I) = (RR_I, O_I+T,\tilde{I})
\end{equation}
where the transported image $\tilde{I}$ evaluated at a spatial point $x \in \mathbb{R}^d$ is given by:
\begin{equation}
\tilde{I}(x) =
\begin{cases}
I \bigg( \varphi^{-1} \Big( R_I D_\alpha^{-1} R_I^\top R^\top (x-O_I-T) + O_I \Big) \bigg) & \text{if } \varphi^{-1}(\dots) \in \Omega \\
0 & \text{otherwise.}
\end{cases}
\end{equation}
However, this does not define a group action, but rather a groupoid action, since it depends on the intrinsic spatial information of the image $(R_I,O_I)$.

\subsubsection{Matching problem and Hamiltonian equations}

The variational problem associated with the registration of the template image $I_S$ under the action of $\D{\alpha}\operatorname{-Isom}(\R^d)\times \operatorname{Diff}_{C_0^k}(\R^d)$ is next given by:
\begin{eqnarray}
    \label{eq:energy_image}
\inf_{(s,A,\beta,v)} J(s,A,\beta,v) 
&=& \frac{1}{2} \int_0^1  \vert s_t\vert^2 + \vert A_t \vert^2 + \vert \beta_t \vert^2 +  \vert v_t \vert_V^2 \, dt 
+ \mathcal{D}^{DA}_{\gamma}(\alpha_1, R_1R_{I_S},T_{I_S}+T_1,I_1) \\
     \text{ s.t }& &     \left\{
        \begin{array}{l}
\dot{\alpha}_t=s_t\alpha_t \\
\dot{R}_t = A_t R_t \\
\dot{T}_t = \beta_t \\
\dot{I}_t = -\langle\nabla I_t, v_t\rangle        
        \end{array} 
        \right.\notag
\end{eqnarray}
where $I_1 = I_S \circ \varphi_1^{-1}$ is the template image deformed by the diffeomorphism. To recover the total deformation, we apply the anisotropic scaling and isometry components to $I_1$. This yields the final deformed image $ \tilde{I}_1 = (\alpha_1,R_1,T_1) \cdot I_1$, using the action defined in \cref{rigid_action_I}. Consequently, the anisotropic scalings and isometries do not deform the template image during the flow. Instead, they are optimized via the dissimilarity term:
\begin{equation}
\mathcal{D}^{DA}_\gamma(\alpha,R,T,I)=\gamma \Vert (\alpha,R,T)\cdot I_S - I_T\Vert_{L^2}^2 + (1 -\gamma) \Vert (\alpha,R,O)\cdot  (I_S \circ \varphi^{-1}) - I_T\Vert_{L^2}^2 
\label{eq:attachment_term_decoupled}
\end{equation}
with $0 \leq \gamma \leq 1$, where we consider the action \eqref{rigid_action_I} of $(\alpha,R,T)$ on the augmented images $(I_S, R_{I_S},O_{I_S})$ and $(I, R_{I_S},O_{I_S})$. The first term evaluates the alignment achieved by the scaling and isometries only and the second term evaluates the alignement resulting from the total deformation. 

In the same fashion as done in \cref{sec:aff_match_problem}, we associate an Hamiltonian to this variational problem
\begin{align}\label{eq:hamil_decoupled}
H(\alpha,R,T,I,p^s,p^A,p^b,p^I,s,A,\beta,v) =& (p^s\mid s\alpha) + (p^A\mid AR) + (p^b\mid \beta) + \left(p^I\mid \langle-\nabla I,v\rangle\right) \\
&- \frac{1}{2}\left( \Vert s\Vert^2 + \Vert A\Vert^2 + \Vert \beta \Vert^2+\Vert v\Vert_V^2 \right) \notag
\end{align}
so that the critical points of the functional satisfy the Hamiltonian equations.

\begin{proposition}\label{prop:geod_eq_scal_isom}
    The geodesic equations associated with the variational problem \eqref{eq:energy_image} are
\begin{equation}
\label{eq:Hamiltonian_full_system}
\left\{
    \begin{aligned}
           &\dot{\alpha}_t = s_t\alpha_t \\
        &\dot{R}_t = A_tR_t \\
        &\dot{T}_t=\beta_t \\
         &\dot{{I}}_t = - \langle \nabla I_t, v_t  \rangle \\
        &\dot{p}^{s}_t=-s_t p^{s}_t \\
         &\dot{p}^A_t = A_t p^A_t\\
         &\dot{p}^b_t = 0 \\
        &\dot{p}^I_t = - \operatorname{div}(p^I_tv_t)
    \end{aligned}
\right. \quad \text{ where }
\quad
\left\{
    \begin{aligned}
       &s_t = p^{s}_t \alpha_t \\
        &A_t = p^A_t R^{\top}_t \\
        &\beta_t = p^\beta_t \\
        &v_t = -K_V(p^I_t \nabla I_t) 
    \end{aligned}
\right.
\end{equation}

\end{proposition}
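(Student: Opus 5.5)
The plan is to follow the proof of \cref{prop:geod_eq_aff}: apply the Pontryagin Maximum Principle to the Hamiltonian \eqref{eq:hamil_decoupled}. I would first write out the pairings explicitly. With componentwise products in $\R^d$, $(p^s\mid s\alpha)=\langle p^s, s\alpha\rangle=\sum_i p^s_i s_i\alpha_i$. The rotation term is $(p^A\mid AR)=\operatorname{tr}((p^A)^\top AR)$, the translation term is $(p^b\mid\beta)=\langle p^b,\beta\rangle$, and the image term is $(p^I\mid -\langle\nabla I,v\rangle)=-\int_\Omega p^I\langle\nabla I,v\rangle\,dx$. The PMP then gives $\dot q=\partial_p H$ and $\dot p=-\partial_q H$ for each state/covector pair $(\alpha,p^s)$, $(R,p^A)$, $(T,p^b)$, $(I,p^I)$, together with the stationarity conditions $\partial_s H=\partial_A H=\partial_\beta H=\partial_v H=0$.

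The state equations are immediate, since $\partial_{p}H$ returns $s\alpha$, $AR$, $\beta$ and $-\langle\nabla I,v\rangle$. For the covectors:
\begin{itemize}
\item $-\partial_\alpha H=-s\,p^s$ (componentwise).
\item $-\partial_R H=-A^\top p^A=A p^A$, using that $A$ is skew-symmetric.
\item $-\partial_T H=0$, because $H$ does not depend on $T$.
\item For $p^I$, the computation is identical to the one in \cref{prop:geod_eq_aff} with $M=0$ and $\beta=0$. Integrate by parts, $\langle\nabla I,p^I v\rangle=\operatorname{div}(p^I I v)-I\operatorname{div}(p^I v)$. The boundary term vanishes because $\Supp(I)\subset\Omega$, which leaves $\dot p^I=-\operatorname{div}(p^I v)$.
\end{itemize}

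For the controls, $\partial_s H=p^s\alpha-s=0$ gives $s=p^s\alpha$, and $\partial_\beta H=p^b-\beta=0$ gives $\beta=p^b$; this is the $p^\beta$ of the statement. The condition $\partial_v H=0$ gives $v=-K_V(p^I\nabla I)$ through the Riesz isomorphism, exactly as in \eqref{eq:standard_hamilt_eq}.

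The step I expect to be delicate is the rotational control. $A$ ranges over $\operatorname{Skew}_d$, not all of $\R^{d\times d}$, so $\partial_A H=0$ only says that the skew-symmetric part of $p^A R^\top - A$ vanishes. That is, $A=\tfrac12\bigl(p^AR^\top-R(p^A)^\top\bigr)$. To recover the stated formula $A_t=p^A_tR_t^\top$, I would show that skew-symmetry of $X_t:=p^A_tR_t^\top$ is propagated by the flow. Using $\dot p^A=Ap^A$ and $\dot R^\top=R^\top A^\top=-R^\top A$, one gets $\dot X_t=A_tX_t-X_tA_t=[A_t,X_t]$. The commutator of a skew matrix with a skew matrix is again skew, so $X_t$ stays skew whenever $X_0$ is. Consequently, if the initial covector is taken in the cotangent space of $SO_d$ at $R_0$ (that is, with $p^A_0R_0^\top$ skew, which is the natural identification), the projection is the identity along the whole trajectory and $A_t=p^A_tR_t^\top$. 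With this last point established, the system \eqref{eq:Hamiltonian_full_system} follows.
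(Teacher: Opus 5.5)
Your proposal is correct and follows the same route as the paper. Both apply the PMP to the Hamiltonian \eqref{eq:hamil_decoupled}, read the controls off from stationarity, compute the covector equations component by component, and use the same integration by parts for $p^I$. The one difference is that you handle the constraint $A\in\operatorname{Skew}_d$, which the paper skips by simply asserting $\partial_A H=0\iff A=p^AR^\top$. You correctly note that stationarity only gives $A=\tfrac12\bigl(p^AR^\top-R(p^A)^\top\bigr)$. Your propagation argument then justifies the stated formula $A_t=p^A_tR_t^\top$: the symmetric part $S_t$ of $X_t=p^A_tR_t^\top$ satisfies the linear equation $\dot S_t=[A_t,S_t]$, so it stays zero whenever $p^A_0R_0^\top$ is skew.
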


\begin{remark}
    Because we rely on a direct product group structure, there are no cross-terms between the deformation modes in the resulting dynamics. In particular, within these geodesic equations, the image evolves exclusively under the action of the diffeomorphic flow. The affine components, as detailed earlier, do not act on the image during the integration but are instead accounted for via the data attachment term in \eqref{eq:attachment_term_decoupled}. Since the affine motion is inherently decoupled from the image dynamics in this formulation, there is no need to introduce an explicit change of variables as done in the previous section.
\end{remark}

\begin{proof}
    Let us consider the Hamiltonian $H$ defined in \cref{eq:hamil_decoupled}.
    The PMP states that the critical points satisfy the following Hamiltonian equations:
    \begin{equation}
    \left\{
        \begin{aligned}
            &(\dot{\alpha}_{t},\dot{R}_{t},\dot{T}_t,\dot{I}_t,\dot{p}^\alpha_t,\dot{p}^A_t,\dot{p}^b_t,\dot{p}^I_t)=(\partial_{p^s} H,\partial_{p^A} H,\partial_{p^b} H,\partial_{p^I} H,- \partial_{\alpha} H,- \partial_{R} H,- \partial_{T} H,- \partial_I H) \\
            &(\partial_s H , \partial_A H,\partial_\beta H,\partial_v H)=0 
        \end{aligned}
    \right.
    \end{equation}
    First, let us explicitly expand the terms composing the first part of the Hamiltonian: 
    \begin{align*}
        &(p^s \mid s\alpha) = \langle p^s, s\alpha \rangle \\
        &(p^A \mid AR) = \operatorname{tr}((p^A)^\top AR) \\
        &(p^b \mid \beta)=\langle p^b, \beta \rangle \\
        &\left(p^I\mid \langle-\nabla I,v\rangle\right) = - \int_{\Omega} p^I(x)\langle\nabla I(x),v(x)\rangle dx.
    \end{align*}

    Consequently, taking the partial derivatives of the Hamiltonian with respect to the control variables $(s,A,\beta,v)$ yields:
    \begin{equation}
    \left\{
        \begin{aligned}
            & \partial_s H = 0 \Longleftrightarrow s = p^s \alpha \\
            & \partial_A H = 0 \Longleftrightarrow A = p^A R^\top \\
            & \partial_{\beta}H = 0 \Longleftrightarrow \beta = p^b \\
            & \partial_{v}H = 0 \Longleftrightarrow v = -K_V(p^I\nabla I)
        \end{aligned}
    \right.
    \end{equation}
    which directly provides the equations for the optimal controls. 

    The differential equations for the covectors $p^s, p^A,$ and $p^b$ follow directly from the partial derivatives of $H$ with respect to the state variables. Since the translation $T$ does not explicitly appear in $H$, we directly get $\dot{p}^b_t = - \partial_T H = 0$. For the scaling parameter, we obtain $\dot{p}^\alpha_t = - \partial_\alpha H = -s_t p^s_t$. 
    For the rotation matrix, deriving the trace yields $\partial_R H = A^\top p^A$. However, since $A_t \in \operatorname{Skew}_d$, we have $A_t^\top = -A_t$. Therefore:
    \begin{equation*}
        \dot{p}^A_t = -\partial_R H = - A_t^\top p^A_t = A_t p^A_t.
    \end{equation*}

    We now focus on the equation for $p^I$. Using the product rule for the divergence operator, it follows that $\langle \nabla I , v p^I \rangle = \operatorname{div}(v p^I I) - I \operatorname{div} (v p^I)$. Then, the Hamiltonian equation corresponding to $p^I$ gives:
    \begin{align*}
        \dot{p}^I_t &= -\partial_{I} H \\
        &= \partial_I \int_{\Omega} p^I(x)\langle\nabla I(x),v(x)\rangle dx  \\
        &= \partial_I \int_\Omega \operatorname{div}(v p^I I) (x) dx - \partial_I \int_\Omega  I(x)\operatorname{div} (v p^I)(x) dx
    \end{align*}

    Since the image vanishes outside its support $\operatorname{Supp}(I) \subset \Omega$, the divergence theorem implies that $\int_\Omega \operatorname{div}(v p^I I)(x) dx =0$. Consequently, computing the derivative with respect to $I$ via the $L^2$ inner product identification yields:
    \begin{align*}
        \dot{p}^I_t &=  - \partial_I \int_\Omega  I(x) \operatorname{div} (v p^I)(x) dx \\
        &= - \partial_I \big(I \mid\operatorname{div}(v p^I)\big) \\
        &= -\operatorname{div}(p^I v) 
    \end{align*}
\end{proof}

In this section, we established the \textit{Full Affine} (\cref{sec:affine_motions}) and \textit{Decomposed Affine} (\cref{sec:decomp_affine}) models, integrating affine transformations, whether full or decomposed into isometries and scalings, with unstructured diffeomorphic deformations. While the derivation of the Hamiltonian equations provides the necessary mathematical framework to perform geodesic shooting for optimal joint registration, solving these systems in practice poses significant numerical challenges. In the next section, we present a numerical implementation of these models and evaluate their performance on both toy datasets and real medical images.

\section{Applications}
\subsection{Optimization strategy}

We now describe the practical implementation of the \textit{Full Affine} (FA) 
and \textit{Decomposed Affine} (DA) models introduced in the previous section.
Both follow a shooting approach: we search in the space of geodesics for the one 
bringing the template closest to the target. The core numerical task is 
integrating the geodesic equations from 
Propositions~\ref{prop:geod_eq_aff} and~\ref{prop:geod_eq_decoupled}, 
which couple the infinitesimal affine component with the LDDMM flow. From now on we consider that images are all discretised.

Our implementation builds on the \textsc{Demeter-Metamorphosis} library~\cite{franccois2021metamorphic, phd_AFrancois}, a PyTorch-based framework for LDDMM and metamorphosis, which we forked and extended to incorporate the affine coupling. Once the geodesics are integrated, gradients are obtained via automatic differentiation, avoiding the need to derive and implement adjoint equations by hand.
The code is available at~\footnote{\url{https://github.com/antonfrancois/Demeter\_metamorphosis/}}. 

A well-known difficulty in affine registration is the roughness of the
optimization landscape: the large number of degrees of freedom of affine
transforms generates numerous local minima, making gradient-based optimizers
prone to convergence to suboptimal solutions~\cite{jenkinson2001global,
jenkinson2002improved}. We address this with two complementary strategies:
\begin{enumerate}
  \item \textbf{Progressive affine enrichment.}
        We perform a coarse-to-fine search by gradually increasing
        the complexity of the affine component
        (e.g.\ translation only, then rigid, then full affine),
        providing a warm initialization at each stage.
  \item \textbf{Variational weighting.}
        We exploit the parameter $\gamma$ 
        from the data attachment terms~\eqref{eq:attachment_term_affine}
        and~\eqref{eq:attachment_term_decoupled} to assign greater
        weight to the affine component relative to the diffeomorphic
        one in the early stages of optimization,
        allowing the affine degrees of freedom to be estimated first.
\end{enumerate}

\paragraph{Covector Activation}

In our framework, registering two images reduces to the estimation of the covector associated with each component of the deformation. These consist of the vector-field covector $p^I : \Omega \to \mathbb{R}$, common to both models, and the affine covectors, which differ between them. In the FA model, the affine covector comprise a matrix $p^A \in \mathbb{R}^{d \times d}$ (with $d=2$ in 2D and $d=3$ in 3D) and a translation vector $p^\tau \in \mathbb{R}^d$. In the DA model, they consist of a rotation covector $p^R \in \mathbb{R}^l$, a translation covector $p^\tau \in \mathbb{R}^d$, and scaling covector $p^s \in \mathbb{R}^d$; when isotropy is assumed, all entries of $p^s$ are equal and reduce to a single scalar parameter. For any given set of covectors, we integrate the geodesic equations from Propositions~\ref{prop:geod_eq_decoupled} and \ref{prop:geod_eq_scal_isom} to obtain the deformed image and its associated deformation. 

A key advantage of this parameterization is that individual components can be frozen or activated simply by excluding the corresponding covector from the optimization. In PyTorch, this amounts to setting \texttt{requires\_grad = False} on the relevant tensors. This enables a progressive \emph{covector activation} strategy: one begins by optimizing only a restricted subset of covector — for instance, translation alone — and gradually unlocks additional components as the optimization proceeds. Fixing $p^I$ while optimizing the affine covector yields a pure affine registration; similarly, fixing $p^s$ within the DA model constrains the deformation to be rigid. 

In practice, the rotation is the most difficult component to recover, as the
energy landscape with respect to orientation contains many local
minima~\cite{jenkinson2001global}. We address this with a \emph{multi-start}
procedure: short optimizations are run from several candidate rotation
initializations, and the best solution is selected as the starting point for
the full joint optimization. For example, candidates can be chosen by uniformly sampling
rotation angles at a fixed increment, i.e.\ $\{2\pi k/n : k = 0,\ldots,n-1,\
n \in \mathbb{N}\}$. This lightweight strategy proves sufficient in our
experiments to avoid poor local minima without significant computational
overhead. For the FA model, each candidate angle is converted to its
corresponding rotation matrix before being fed to the optimizer. In both
models, the multi-start phase uses the data term~\eqref{eq:attachment_term_affine}
with $\gamma = 1$, disabling the diffeomorphic component entirely.








\paragraph{Parameter variational control \label{sec:var_control}}

The second strategy addresses a fundamental ordering problem in joint affine-diffeomorphic registration. Ideally, the affine component should account for the global, coarse differences between the moving and fixed images --- such as orientation, scale, and position --- while the diffeomorphic component refines the remaining local discrepancies. In practice, if both components are optimized simultaneously from the start, the diffeomorphic component tends to dominate. Because it is far more expressive, it can absorb large global differences by constructing a high-energy vector field, even when those differences would be more parsimoniously explained by a simple affine transformation. Once the diffeomorphic component has committed to such a solution early in the optimization, the affine component has little residual signal left to act on and remains near its initialization. The optimizer is then effectively stuck in a local minimum where the affine parameters are poorly estimated.

Concretely, we introduce a pseudo-time $k$ indexing the optimization iterations and define a composite energy:
\begin{equation*}
    E_k(p) = \gamma_k\, A(p) + (1 - \gamma_k)\, D(p),
\end{equation*}
where $A(p)$ measures the discrepancy between the target and the source deformed by the \emph{affine component alone}, $D(p)$ measures the discrepancy under the \emph{full} (affine and diffeomorphic) deformation, and $\gamma_k \in [0,1]$. When $\gamma_k = 1$ the optimizer attends exclusively to the affine alignment; as $\gamma_k \to 0$ it progressively shifts the focus to the joint registration. For notation ease, we will write $A_k := A(p_k)$ in the following.

We aim to control the relative contribution of each component to the total energy. By initially setting $\gamma$ to favour the affine term, we force the optimizer to first resolve the coarse geometric alignment before the diffeomorphic component is allowed to contribute meaningfully. As optimization proceeds, $\gamma$ is gradually relaxed, giving the diffeomorphic component increasing freedom to capture the finer, non-linear residuals. This progressive handover from affine to diffeomorphic deformation mirrors the coarse-to-fine philosophy common in multi-resolution registration schemes~\cite{jenkinson2002improved}, but controls the nature of the deformation model --- progressing from affine to diffeomorphic --- rather than the resolution of the image.

The key idea is to decrease $\gamma_k$ as the affine alignment stabilizes, i.e.\ as $\dot{A} \to 0$, while ensuring the transition is smooth enough to give the optimizer time to find the best compromise between the two components. To this end, we first define an affine rate estimator
\begin{equation}
    r_A(k) = \frac{1}{w - 1} \sum_{j=k-w+1}^{k-1} |A_j - A_{j-1}| \approx \overline{|\dot A|},
\end{equation}
where $w$ is the length of the trailing window over which increments are averaged.
We then define a sensitivity function $S$
\begin{equation*}
    S(k) = \begin{cases}
        1 & \text{if } k \leq w,\\
        \min\!\left(1,\, r_A(k)/\tilde r\right) & \text{otherwise,}
    \end{cases}
    \qquad \tilde r = \frac{|A_1 - A_0|}{2},
\end{equation*}
where $\tilde r$ normalizes the amplitude of $r_A$, making the remaining parameters dimensionless and easier to tune. Anchoring it on the first increment is deliberate: $|A_1 - A_0|$ measures how fast the affine term improves while it is still resolving the initial, coarse misalignment, and therefore scales with the magnitude of that misalignment. The threshold is thus adapted to each pair of images rather than fixed globally, and $S$ compares the current affine rate against the rate observed while the alignment was still coarse. 

Rather than responding only to affine improvement ($\dot{A} < 0$), $S$ responds to any large variation in $A$, whether positive or negative. A large positive $\dot{A}$ signals that the affine component is unstable --- potentially escaping a local minimum ---
and warrants keeping $\gamma$ high to maintain affine influence during that transition. In both cases, $|\dot{A}|$ being large indicates that the affine optimization is not yet settled, justifying a higher weight on the affine term. Conversely, when $\dot{A} \approx 0$ the affine has stabilized and $\gamma$ is allowed to decay, progressively handing control to the diffeomorphic component. 
The temporal evolution of $\gamma_k$ is then governed by
\begin{equation}
    \dot \gamma_k = \frac{2}{\nu}\gamma_k^\alpha  \left[S(k)(1 - \gamma_k)^\alpha - (1 - S(k))\right], \quad \gamma_0 =1; \quad \alpha = \frac12,
    \label{eq:dot_gamma}
\end{equation}

where $\nu > 0$ is a time constant, expressed in iterations, that lower-bounds the number of iterations required for $\gamma$ to reach zero. The evolution of $\gamma$ is designed to proceed in three phases: a waiting phase while $S(k) = 1$, a fast descent, and a smooth landing at zero. When $\gamma$ reach zero, only $D$ is optimized, we call this moment \textit{Handover}. 

\begin{wrapfigure}{R}{0.435\textwidth}
    \centering
    \includegraphics[width=\linewidth]{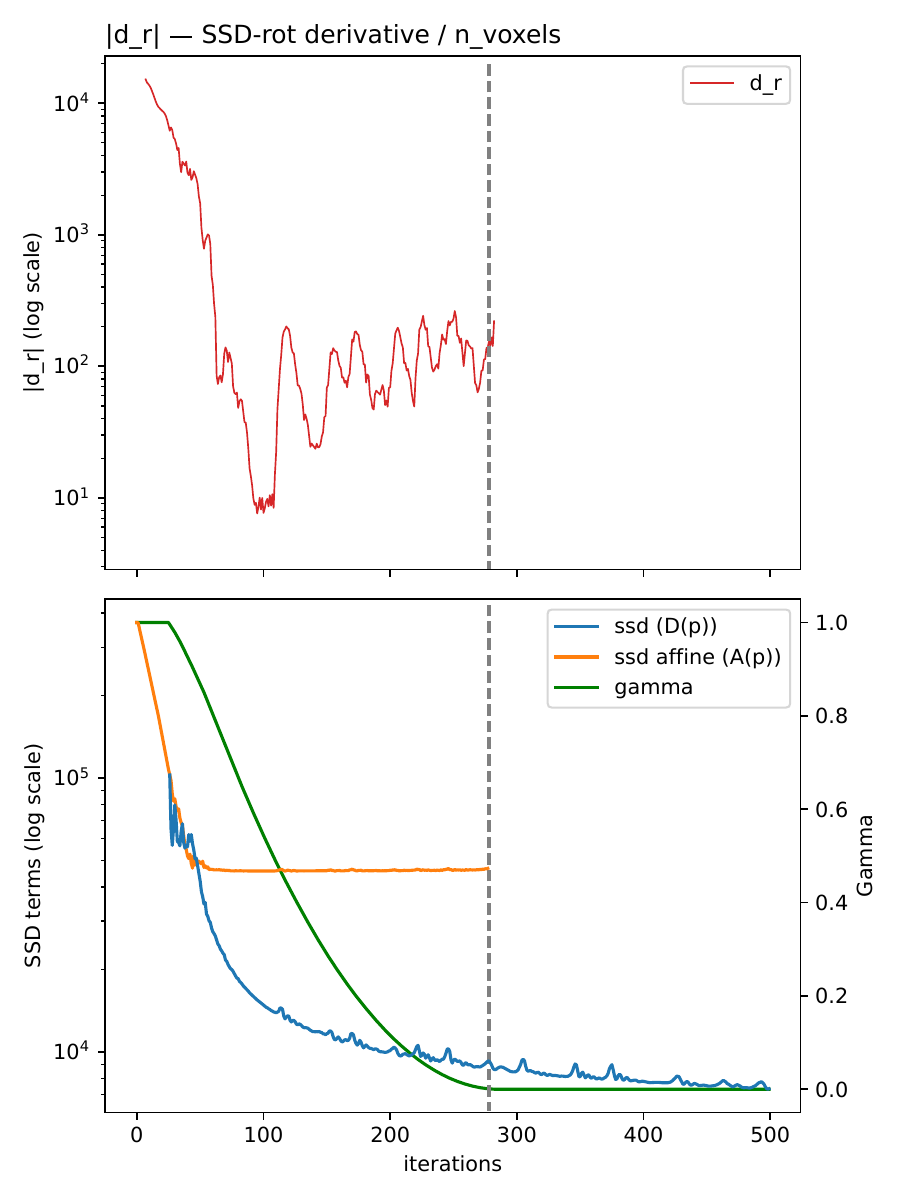}
    \caption{\textbf{Cost evolution  depending on $\gamma$.} 3D MRI image from the experiments presented in \cref{sec:med_img} and $w=8, \nu=250$ 
     $\tilde r$ activated at iter 26: $\tilde r=15191.03 $
     Handoff detected at iter 278 and shown by the gray dotted line.
    }
    \label{fig:variational_cost}
\end{wrapfigure}

\begin{remark}[fixed points]
The bracket in \cref{eq:dot_gamma} is strictly decreasing in $\gamma$, so its value at the origin, $2S-1$, governs the dynamics. The equation admits two fixed points, $\gamma^{*,1} = 0$ and $\gamma^{*,2} = \frac{2S-1}{S^2}$ (for $\alpha = \frac12$).
When $S \leq \frac12$, $\gamma^{*,2} \leq 0$ leaves $[0,1]$, so $\dot\gamma < 0$
throughout and $\gamma$ decays to zero; when $S > \frac12$, $\gamma^{*,2}$ is
attracting and increases to $1$ as $S \to 1$, holding $\gamma$ high while the affine is still moving. Handover is thus triggered once the affine rate falls below half its reference $\tilde r$, i.e.\ once $r_A(k) \leq |A_1 - A_0|/4$), rather than requiring $\dot A$ to vanish exactly.
\end{remark}
The parameters introduced above are few and largely interpretable; we now describe how each should be chosen.
\begin{enumerate}
    \item \textbf{The window $w$} averages the increments of $A$ and thereby controls the noise level of $S$; it also sets the length of the initial phase during which
    $S = 1$.

    \item \textbf{The exponent $\alpha$} controls the behavior near the origin. As
    $\alpha < 1$, the right-hand side is not Lipschitz at $\gamma = 0$, so $\gamma$
    reaches zero in finite time while $\dot\gamma \to 0$ ensures a smooth landing,
    whereas $\alpha = 1$ only approaches zero asymptotically. It is a structural
    choice, fixed to $\alpha = \frac12$.

    \item \textbf{The time constant $\nu$} sets the handover duration. Integrating  \cref{eq:dot_gamma} from $\gamma_0 = 1$ with $S = 0$ gives $t^* = \nu$, the fastest possible descent, so $\nu$ lower-bounds the iterations needed to reach zero, counted from the onset of the descent. It should be chosen so that $k_s + \nu$ remains below the iteration budget, $k_s$ being the iteration where  $S$ first crosses $\frac12$.
\end{enumerate}

\Cref{fig:variational_cost} illustrates the behavior of the scheme on a
representative subject from the IXI dataset presented in \cref{sec:med_img}, with $w = 8$ and $\nu = 250$. The affine rate $r_A$
(top panel, log scale) falls by more than two orders of magnitude over the first
fifty iterations as the affine component resolves the coarse misalignment, then
fluctuates around a residual floor without ever returning to its initial
amplitude. The sensitivity $S$ follows this decay and remains close to zero from
iteration $\sim\!90$ onwards, so that the bracket in \cref{eq:dot_gamma} stays
negative and $\gamma$ decreases monotonically. Held at one during the warm-up,
$\gamma$ then descends smoothly and reaches zero at iteration $\sim\!278$, consistent with the bound $k_s + \nu$ of the preceding remark; the remaining
iterations are devoted entirely to the diffeomorphic component.

The bottom panel shows the corresponding data terms. The affine discrepancy
$A(p)$ drops steeply while $\gamma$ is still large and plateaus from iteration
$\sim\!50$, confirming that the affine component has settled well before control
is handed over. The full discrepancy $D(p)$ takes over from that point and keeps
decreasing long after $\gamma$ has vanished, capturing the non-linear residual
that an affine transformation cannot explain. The apparent collapse of $A(p)$ to
zero at iteration $\sim\!180$ is not a convergence effect: once
$\gamma < \varepsilon$, the affine-only term no longer contributes to the energy
and is therefore not evaluated.

This strategy tries adapt to the specific dynamics of a registration. Other strategies could be developed. The key insight of this paragraph is that: \textbf{If we perform join registration, there has still to be a coarse-affine to fine-diffeomorphic handover and thus we need a control on the optimisation trajectory.}

\paragraph{Optimizer choice}
Finally, this strategy is used in conjunction with the Adam optimizer~\cite{kingma2017adam}, which offers two key advantages in our setting: it provides better control over the maximum step size, and crucially, it allows different learning rates to be assigned to each set of covector. This per-component control was essential to achieving proper convergence, as the affine and diffeomorphic covector
operate on very different scales.

\subsection{FA and DA models on toy examples}

\begin{figure}[htbp]

    \begin{subfigure}[t]{.2\textwidth}
        \centering
        \includegraphics[width=.9\linewidth]{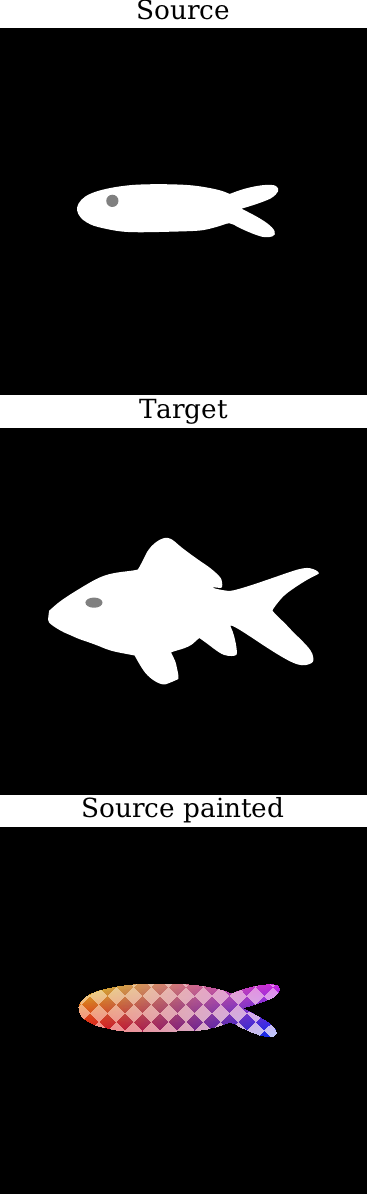}
        \caption{%
            \textbf{Source, target, and source texture.}
        }
        \label{fig:toyExample_source_target}
    \end{subfigure}%
    \begin{subfigure}[t]{.75\textwidth}
        \centering
        \includegraphics[width=\linewidth]{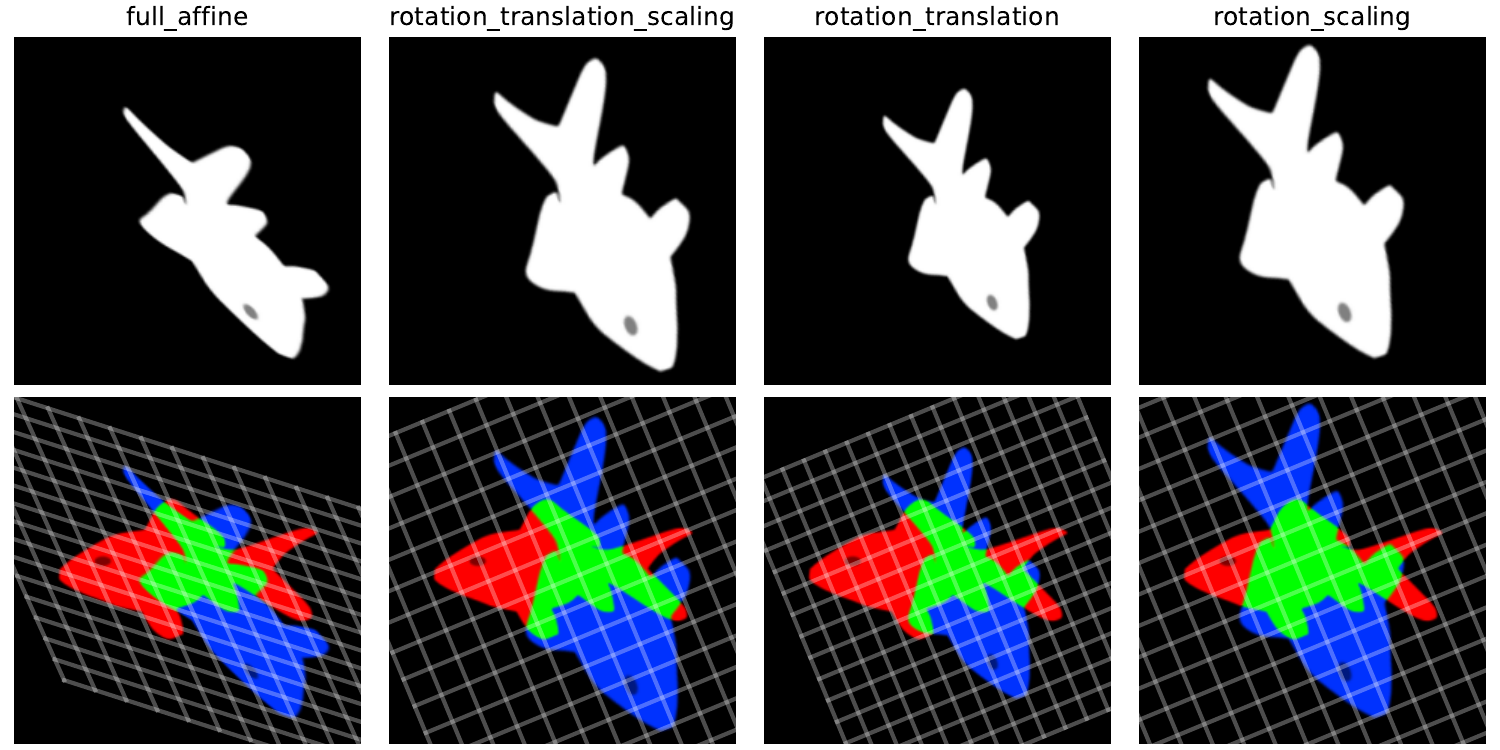}
        \caption{%
            \textbf{Target shapes.}%
        }
        \label{fig:toyExample_targets}
    \end{subfigure}

    \vspace{1ex}

    \begin{subfigure}[b]{\textwidth}
        \centering
        \includegraphics[width=\linewidth]{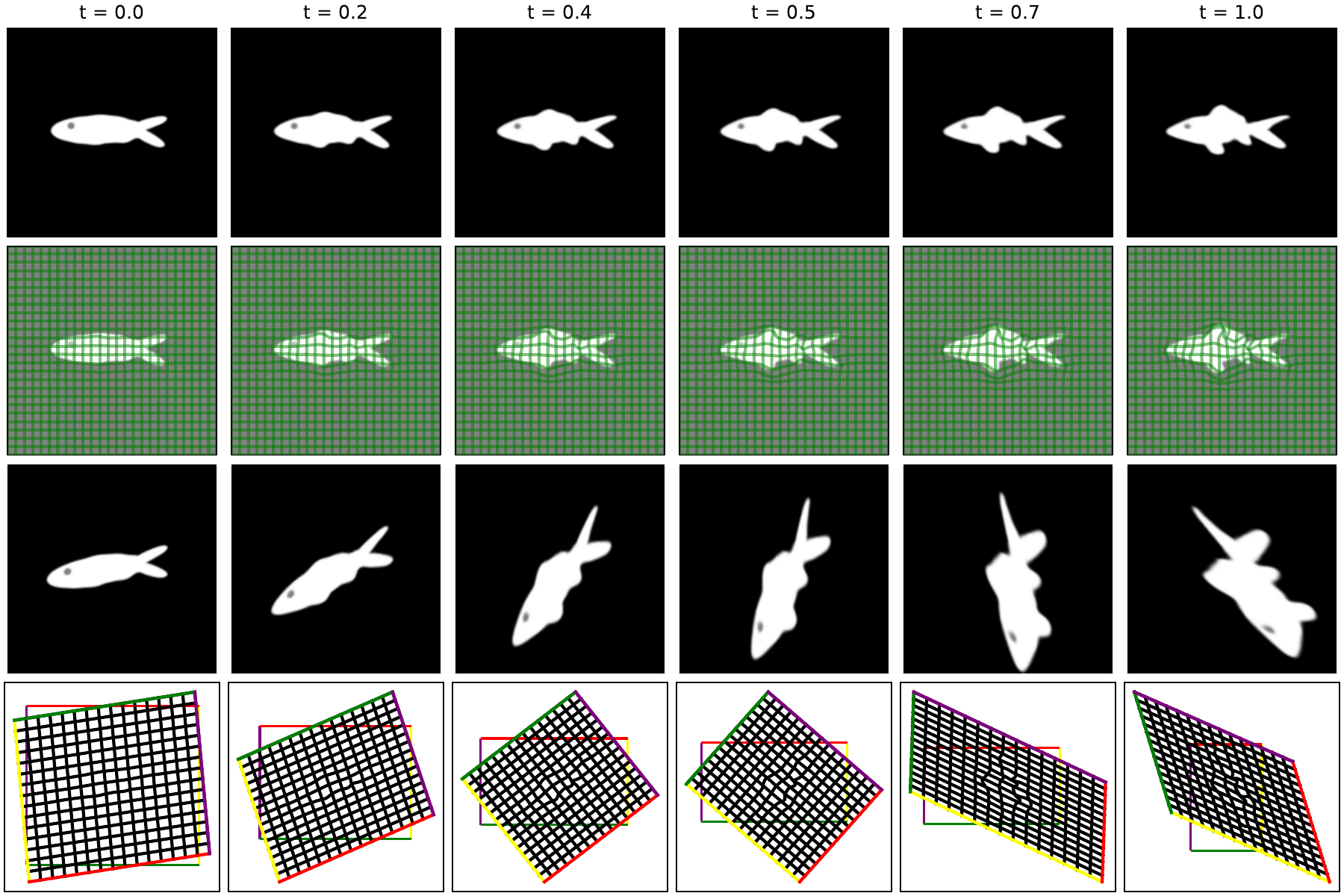}
        \caption{Fish integration.}
        \label{fig:toyExample_integration}
    \end{subfigure}

    \caption{\textbf{Toy example: fish registration overview.}
    \textbf{(a)}~Simple elongated fish source registered to a larger goldfish target.
    The source is painted with a checkerboard texture (\emph{painted source}) to visualize
    the deformation field throughout registration.
    \textbf{(b)}~Four target shapes derived from the raw target via constrained affine
    transformations: full affine~(A), R+S+T, R+T, and R+S, defining the four columns
    of \cref{fig:fish_grid_along} and \cref{fig:fish_grid_succ}.
    \textbf{(c)}~Integration on the full affine target using method FA:
    top two rows show the diffeomorphic evolution, bottom two the total deformation where the affine evolution has been added.
    }
    \label{fig:toyExample_source_targets}
\end{figure}

\begin{figure}[htp]
  \begin{adjustbox}{addcode={\begin{minipage}{\width}}{\caption{%
    \textbf{FA-LDDMM and DA-LDDMM models: qualitative results on the four targets of \cref{fig:toyExample_targets}.}
    Each group of six images is organised as follows.
    \textit{Left:} full registration (affine\,+\,diffeomorphic component); top row shows
    the registered image, bottom row its overlay with the target.
    \textit{Center:} affine component only; same row organisation.
    \textit{Right:} diffeomorphic component only; top row shows the registered image,
    bottom row the painted source from \cref{fig:toyExample_source_target} transported
    by the diffeomorphism, revealing internal deformations.
      }\label{fig:fish_grid_along}\end{minipage}},rotate=270,center}
      \includegraphics[scale=.5]{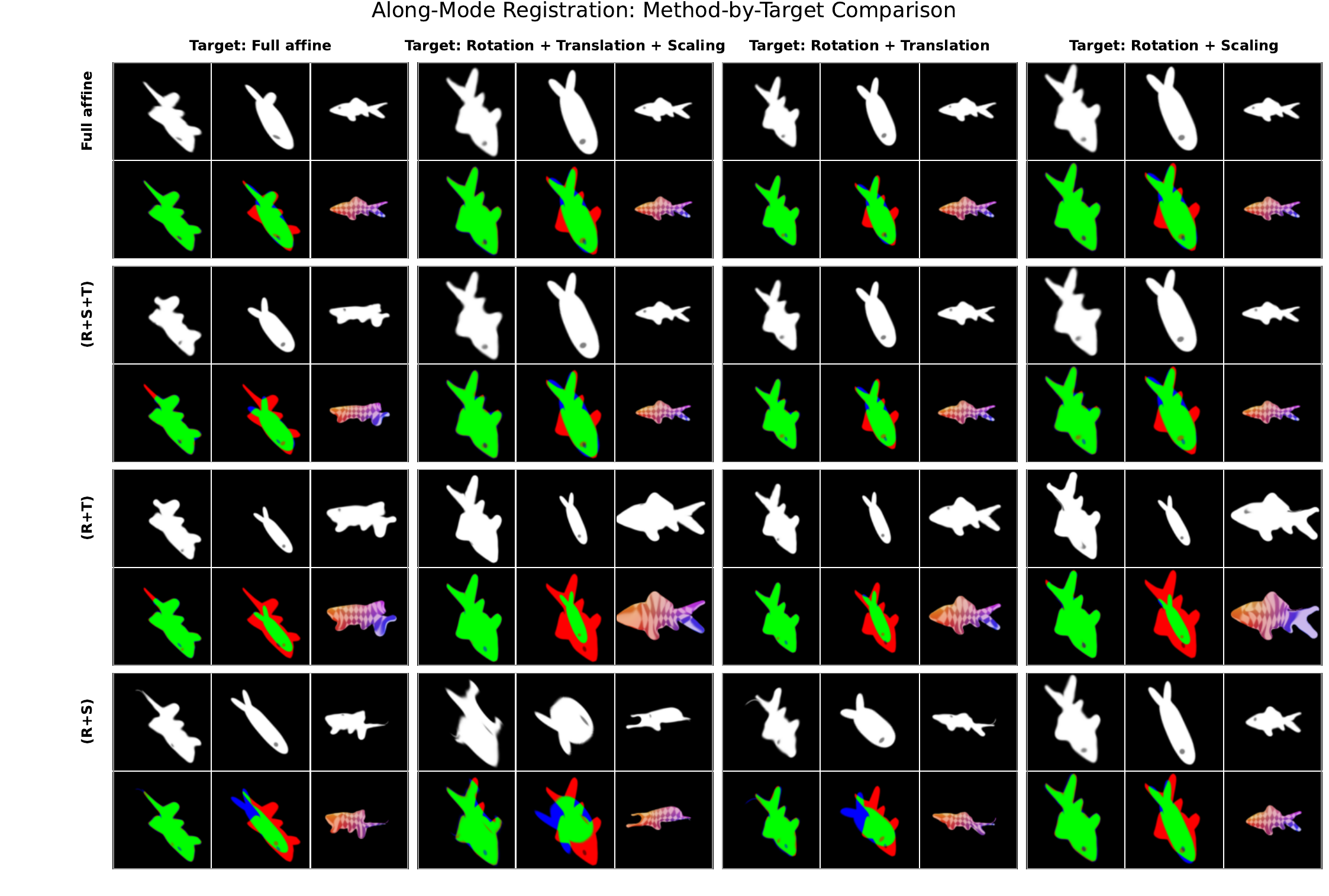}%
  \end{adjustbox}
\end{figure}

\begin{figure}[htp]
  \begin{adjustbox}{addcode={\begin{minipage}{\width}}{\caption{%
    \textbf{Affine registration followed by classical LDDMM: qualitative results on the four targets of \cref{fig:toyExample_targets}.}
    Each group of six images is organised as follows.
    \textit{Left:} full registration (affine\,+\,diffeomorphic component); top row shows
    the registered image, bottom row its overlay with the target.
    \textit{Center:} affine component only; same row organisation.
    \textit{Right:} diffeomorphic component only; top row diffeo grid as unlike  \cref{fig:fish_grid_along} we cannot isolate the diffeomorphism component,
    bottom row the painted source from \cref{fig:toyExample_source_target} transported
    by the diffeomorphism, revealing internal deformations.
      }\label{fig:fish_grid_succ}\end{minipage}},rotate=270,center}
      \includegraphics[scale=.4]{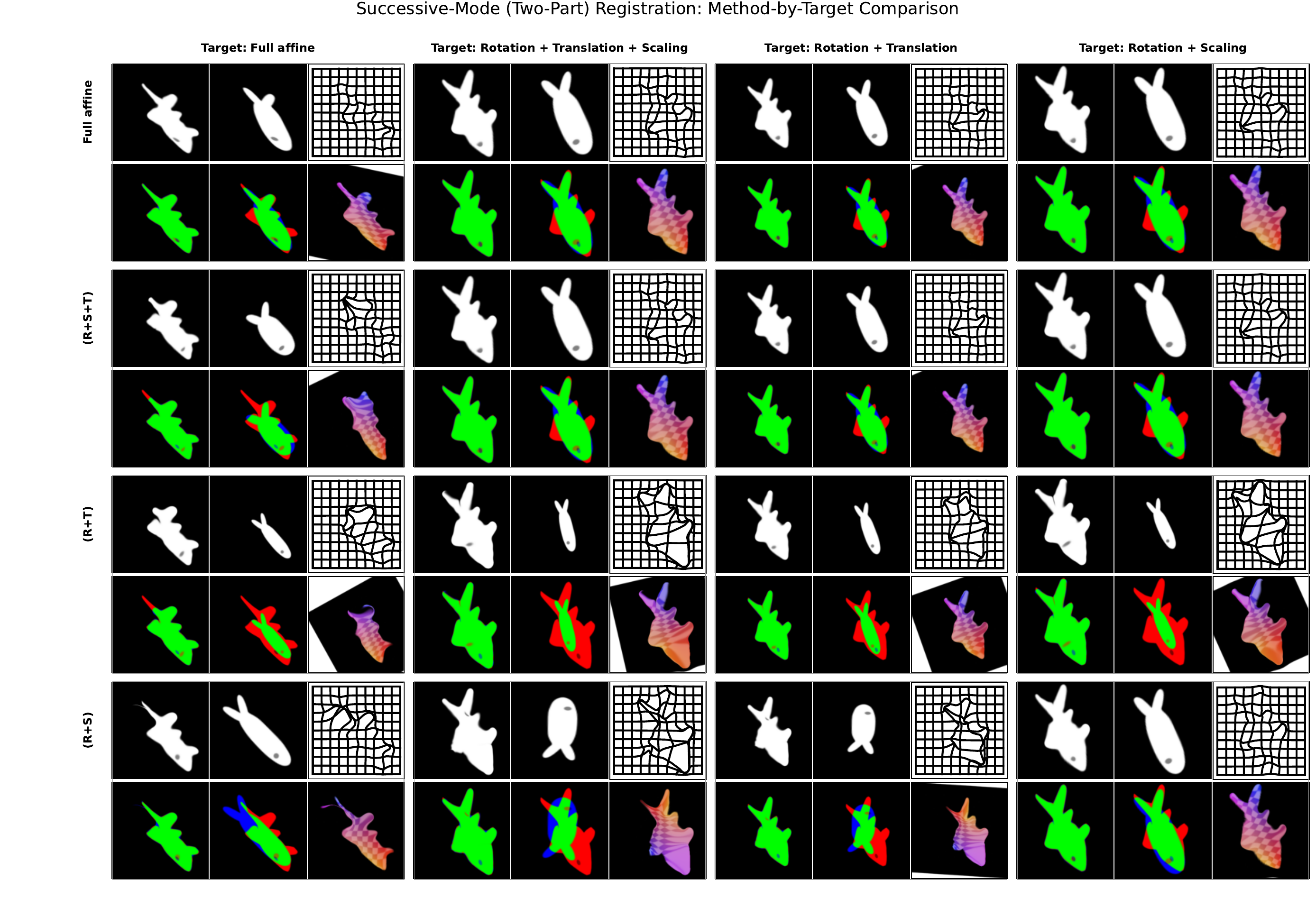}%
  \end{adjustbox}
\end{figure}

We demonstrate our approach on 2D toy examples designed to illustrate the behavior of each model and to assess their respective capabilities. \Cref{fig:toyExample_source_targets} gives an overview of the experimental setup that is displayed on \cref{fig:fish_grid_along} and \cref{fig:fish_grid_succ}.
For these experiments, we consider images taken in landscape orientation.

\Cref{fig:toyExample_source_target} shows the raw source and target images alongside the \emph{painted source}, a diagnostic visualization in which a checkerboard texture is overlaid on the source before registration. This tool is particularly revealing: a deformed shape may appear visually acceptable in terms of silhouette, yet the painted texture exposes local artefacts such as severe shearing or compression that betray a poor affine initialization or an over-reliance on the diffeomorphic component. We encourage the reader to attend closely to the checkerboard texture in the registered painted images throughout \cref{fig:fish_grid_along} and \cref{fig:fish_grid_succ}.
\Cref{fig:toyExample_targets} displays the four target images, each corresponding to a distinct mode of affine transformation: a full affine deformation, each individual component of the decomposed model (rotation, translation, scaling), and two combined modes (rotation-translation and rotation-scaling). This experimental design serves three purposes:
\begin{enumerate}
    \item to verify that the DA model behaves as expected when applied to each target, as examined in the registration grids of \cref{fig:fish_grid_along} and \cref{fig:fish_grid_succ};
    \item to provide a pedagogical illustration of the effect of each affine component in isolation and in combination;
    \item to compare our joint affine-diffeomorphic method (the \emph{along} mode; \textit{c.f.:} \cref{fig:fish_grid_along}) against the standard sequential approach used in the LDDMM literature, where affine registration is performed as a pre-processing step prior to the diffeomorphic optimization (the \emph{successive} mode; \textit{c.f.:} \cref{fig:fish_grid_succ}).
\end{enumerate}

In addition, \cref{fig:toyExample_integration} illustrates the geodesic integration trajectory for the FA model applied to the full affine target. The two top rows show the evolution of the diffeomorphism: the vector field is evaluated at the current position of the source, allowing it to capture shape differences precisely where they occur. The two bottom rows display the full registration trajectory, showing the combined affine and diffeomorphic deformation; the deformation grids represent the composition of both components at each step.

The registrations in \cref{fig:fish_grid_along} follow the variational control scheme of \cref{sec:var_control}: for each of the four methods (FA, R+S+T, R+T, R+S) applied to each of the four targets, we first run a brief affine-only search via progressive covector activation, then let the joint model converge to its final affine-plus-diffeomorphic geodesic. The registrations in \cref{fig:fish_grid_succ} instead follow the standard two-stage procedure. First, we register with the corresponding FA or DA implementation but with the diffeomorphic component deactivated, yielding a pure affine registration $(A,b)$. Second, we apply this transformation to the source image, $\tilde S = (A,b)\cdot S$, and run a classical LDDMM registration from $\tilde S$ to $T$.

First, a visually good final registration does not guarantee a well-behaved diffeomorphic deformation. In other words, in both figures, the LDDMM part registers the target shape effectively, but it is clear that the resulting deformation is far from a natural-looking one. This is particularly apparent in the R+T rows, where the registered shape appears reasonable but the deformation of the painted texture reveals large, irregular distortions. The deformation grids in the successive mode (\cref{fig:fish_grid_succ}) are particularly revealing: in the R+T rows (third row pair), the grids show not just large deformations but genuinely pathological ones — the grid lines cross each other, indicating that the diffeomorphic map is close to, or actually violating, injectivity in some regions. This is a strong qualitative indicator that the successive approach struggles when the affine pre-registration leaves the source far from the target. By contrast, in the along mode, the painted texture deformations, while imperfect, do not show this kind of fold-over behavior.

Second, the joined methods (\cref{fig:fish_grid_along}) generally produce smoother and better-distributed deformations than the successive ones, even when the two methods yield similar-looking affine pre-registrations. In the (R+T method, R+T target) case in particular, the along method centers the fish more effectively after the affine step, which reduces the burden on the diffeomorphic component and results in a cleaner overall deformation. In the Full affine row, the deformation grid in the successive mode (\cref{fig:fish_grid_succ}) is remarkably regular compared to other rows — nearly a flat grid with only mild distortion — across all four targets. This makes sense: when the affine component has full degrees of freedom, it can absorb most of the shape difference, leaving very little work for the diffeomorphic step. Interestingly, the joined method in the same configuration produces a similarly clean deformation, suggesting that the two approaches converge once the affine parameterization is rich enough.

The four rows of \cref{fig:fish_grid_along} and \cref{fig:fish_grid_succ} follow the same ordering as the four columns (full affine, R+S+T, R+T, R+S), so that diagonal cells pair each method with its matching target. Third, registrations in the lower triangle of the grid — for example, the (R+S method, FA target) cell, where the affine transformation used by the method has fewer degrees of freedom than the one used to generate the target — are structurally underconstrained: the affine step cannot fully account for the required shape change. Despite this, the diffeomorphic component still manages to partially compensate, though at the cost of larger local deformations, as evidenced by the distorted grid in the successive mode.

Fourth, cells in the upper triangle — for example, the (FA method, R+S target) cell, where the method's affine parameterization has at least as many degrees of freedom as the target transformation — generally produce good registrations in both modes. Diagonal cells, where method and target coincide, are the most favorable case: for instance, the (R+S+T, R+S+T) pair converges to similar deformations in both the along and successive modes, for the same reason noted above for the Full affine row. A closer look, however, reveals that the along diagonal cells tend to produce slightly more compact and centered intermediate shapes than the successive ones, even in this favorable setting.

\subsection{Medical images \label{sec:med_img}}

\paragraph{Dataset and experiment overview }
We test our implementation on images from the IXI dataset\footnote{IXI brain dataset: \url{https://brain-development.org/ixi-dataset/}}, a collection of MR images from healthy subjects acquired at three hospitals in London. For this study, we use the T1-weighted images only. As is standard practice for brain MRI, we register all images to an atlas, the MNI template~\cite{fonov2009unbiased, fonov2011unbiased, collins1999animal}, constructed as the nonlinear average of 152 normal adult MRI scans.

To evaluate registration performance, we derive five anatomical structures from the FastSurfer~\cite{henschel2020fastsurfer} parcellation by grouping its labels into the following classes: subcortical gray matter (SCGM), cerebrospinal fluid (CSF), cortical gray matter (GM), white matter (WM), and brainstem. After registration, the resulting deformation is applied to the source segmentation, and Dice coefficients are computed for each class. We report the median Dice score across the five regions as an overall measure of anatomical correspondence. We randomly selected $54$ subjects from the dataset and visually inspected each segmentation to verify its quality; the cohort size is limited by this manual inspection step.

All registrations are performed in a fixed direction: each IXI volume acts as the moving image and the MNI template as the fixed image, so the target is common to all $54$ pairs. The target segmentation is obtained by applying FastSurfer to the template itself, so that source and target labels are produced by the same pipeline and the Dice scores are not affected by a discrepancy between labeling conventions. Preprocessing is deliberately minimal. Skull stripping is inherited from the FastSurfer segmentation and requires no separate step. MR images are accompanied by an affine matrix mapping voxel indices to scanner (world) coordinates; as is common in neuroimaging pipelines, we first apply this transformation to bring each subject into the orientation of the template, and resample the volume onto the template grid, of size $193 \times 229 \times 193$ at $1\,\mathrm{mm}$ isotropic resolution. Because the cohort is acquired on three different scanners and our similarity term is intensity based, we additionally normalize each volume by clipping intensities at their $99$th percentile and rescaling to $[0,1]$; the upper clip suppresses the sparse hyperintense voxels that would otherwise dominate the scaling. No bias field correction, histogram matching, or inter-site harmonization is applied. As illustrated in the source-vs-target panel of \cref{fig:ixi_brains}, the alignment obtained from the world-coordinate transformation alone remains coarse.

\paragraph{Competing methods}
As discussed in the introduction (\cref{sec:intro}), classical pipelines customarily perform the affine registration as a separate preprocessing step. For this step we use FLIRT~\cite{jenkinson2002improved}, as it gives the best Dice scores among the affine methods we tested. After applying FLIRT we apply LDDMM on the outcomes using the implementation from Demeter\_metamorphosis.
uniGradICON~\cite{tian2024unigradicon} is a foundation model for registration that produces a single warped grid without an explicit affine stage; the affine and diffeomorphic contributions are therefore entangled, and the affine part cannot be evaluated on its own. CARL~\cite{greer2025carl} is a very recent model that estimates the affine and diffeomorphic components jointly while keeping them separable, which lets us test our claim that joint registration yields better alignment. Both uniGradICON and CARL were run with instance optimization enabled (the IO variant), which makes the comparison with optimization-based classical methods fairer.

\paragraph{Implementation details.}
For our method, we use the variational scheme presented in \cref{sec:var_control}, first fitting the affine part only and progressively shifting weight toward the vector-field estimation, giving the scores \texttt{affine only} and \texttt{full}. We then apply a refinement phase that consists of applying a pure LDDMM on the found affine registration. 
We perform this refinement step because we noticed that Adam has trouble converging to the actual minimum due to difficulties controlling the gradient step, a problem resolved by LBFGS. However, as discussed in \cref{sec:var_control}, LBFGS is not the right optimizer choice for the joint registration. So both LDDMM steps after FLIRT and ours use LBFGS for fairness of comparison. 

Note that the refinement is not a return to sequential registration, because the affine it starts from was never estimated in isolation — it was selected jointly, to leave a residual a diffeomorphism can absorb cheaply. The comparison FLIRT+LDDMM vs. ours+refinement holds the LDDMM stage fixed and varies only the origin of the affine.
We want to emphasize that the reason refinement helps at all is an optimizer artefact (Adam step control vs. LBFGS), not a modeling deficiency.

The chosen reproducing kernel $K$ is a sum of Gaussians at scales $\sigma \in \{3, 7\}$, chosen to capture both the coarse displacement of the cortical mantle and the finer sulcal adjustment. The joint stage is optimized with Adam for at most $500$ iterations, stopping early once the relative change in total energy falls below $10^{-4}$ over three consecutive iterations. The variational control of \cref{sec:var_control} is run with $\nu = 250$ and $\alpha = 1/2$, the sensitivity $S$ being estimated from a trailing window of $8$ iterations. The refinement stage is a pure LDDMM initialized from the affine transformation recovered by the joint stage, optimized with LBFGS for at most $50$ outer iterations, each performing up to $20$ inner iterations with a history size of $10$, and stopping on a relative tolerance of $10^{-3}$ sustained over three iterations. The LDDMM stage applied after FLIRT uses the same LBFGS settings, so that the two pipelines differ only in the origin of the affine transformation.

\begin{figure}
    \centering
    \includegraphics[width=1\linewidth]{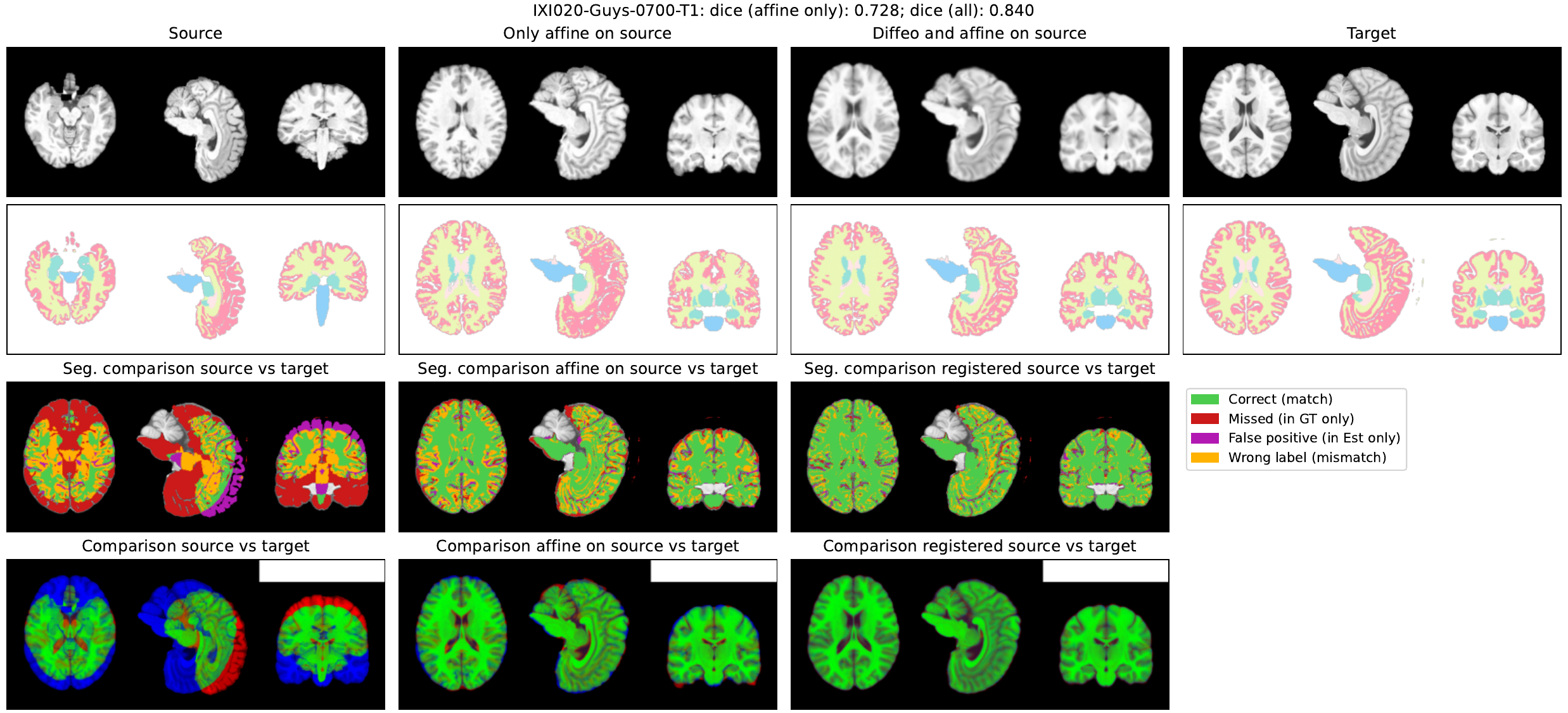}
    \caption{\textbf{Qualitative registration result on subject \texttt{IXI020-Guys-0700-T1} registered to the MNI template.} Each panel displays three orthogonal slices (axial, sagittal, coronal) taken through the centre of the volume. The target column shows the template.
    \textbf{Row~1 --- intensity:} source, source transported by the affine component alone, source transported by the full transformation (affine composed with the     diffeomorphism), and target.
    \textbf{Row~2 --- segmentations:} the five anatomical structures of the source, propagated by the same transformations, and the target labels.
    \textbf{Row~3 --- label agreement} with the target segmentation, before     registration, after the affine component alone, and after the full    transformation; green marks agreement, red labels present in the target only, purple labels present in the estimate only, and orange voxels assigned to the    wrong structure.
    \textbf{Row~4 --- intensity agreement} for the same three states, where green     indicates overlap and red and blue indicate intensity present in only one of the two images.
    Mean Dice over the five structures is $0.728$ after the affine component and $0.840$ after the full transformation; the refinement stage is not shown.
    }
    \label{fig:ixi_brains}
\end{figure}

\begin{figure}
    \centering
    \includegraphics[width=.6\linewidth]{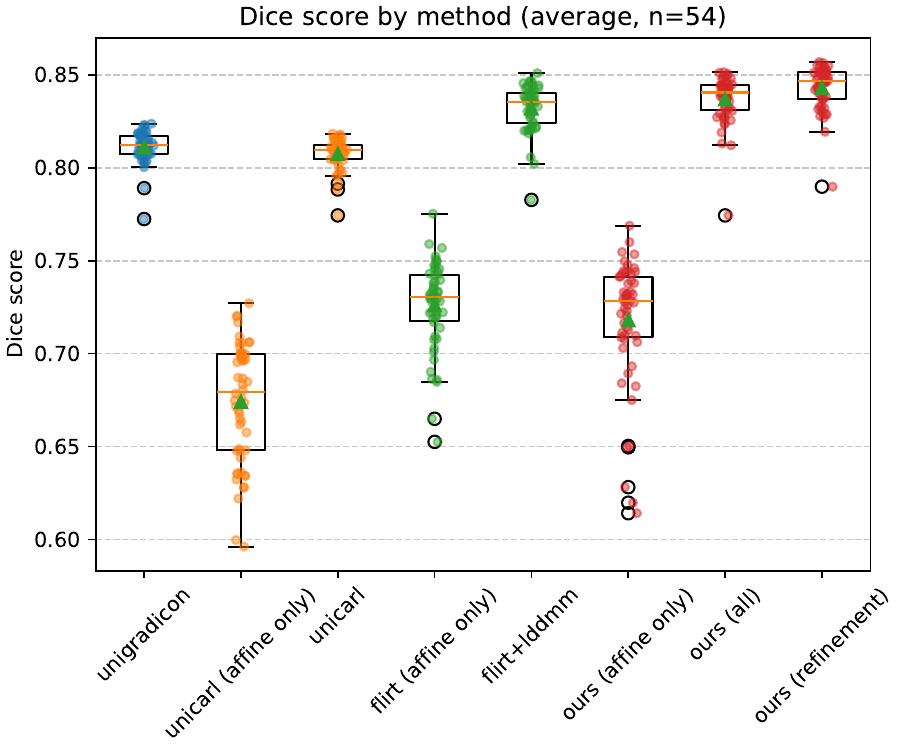}
    \caption{\textbf{Distribution of the mean Dice score over the five anatomical structures}, per method and deformation part ($n=54$ subjects). Boxes span the interquartile range, the central line marks the median, and whiskers extend to $1.5$ times the interquartile range. The distributions of FLIRT+LDDMM and of our method overlap substantially, which motivates the paired analysis of \cref{tab:dice_pairwise}.} 
    \label{fig:ixi_box_plot}
\end{figure}

\paragraph{Results}
\Cref{fig:ixi_brains} illustrates the behavior of the two components of the estimated transformation on a representative subject of the IXI cohort. The affine component alone already accounts for the bulk of the global misalignment: the residual red and blue bands that surround the brain in the leftmost comparison of the last row --- signatures of a global scale and pose mismatch --- are almost entirely removed, and the label agreement rises from a fragmented pattern to a largely green cortical ribbon, with a median Dice of $0.718$. What remains after this stage is local: the cortical folding pattern of the subject is still its own, and the disagreement concentrates in a thin band along the sulci and around the ventricular boundaries, where subject and template differ in shape rather than in position.

The diffeomorphic component resolves precisely this residual. The ventricles, whose outline is visibly thinner in the source than in the template, are brought into agreement, and the sulcal band narrows to a few voxels along the cortical surface, lifting the median Dice to $0.841$. The deep grey structures, already well placed by the affine component, are refined without being displaced. This division of labour --- global pose and scale absorbed by the affine part, local anatomical variability by the diffeomorphism --- is the intended behavior of the joint scheme, and is obtained here in a single optimization in which both components are estimated concurrently rather than in sequence.

\begin{table}[t!]                                                                                       
    \centering                                                                                         
    \caption{Dice score comparison by method (average over all structures, $n=54$). For comparison, before registration the average dice mean is $0.1934 \pm0.0707$  and median is $0.1708$}                  
    \label{tab:dice_comparison}                                                                       
    \begin{tabular}{llcc}                                                                              
      \toprule                                                                                         
      \textbf{Method} & \textbf{Deformation part} & \textbf{Mean $\pm$ Std} & \textbf{Median} \\       
      \midrule                                                    
        uniGradICON \cite{tian2024unigradicon}      & full          & $0.811 \pm 0.008$ & $0.812$ \\
      \midrule
      \multirow{2}{*}{CARL \cite{greer2025carl}}
                           & affine only   & $0.674 \pm 0.032$ & $0.679$ \\
                           & full          & $0.808 \pm 0.008$ & $0.810$ \\
      \midrule
      \multirow{2}{*}{FLIRT \cite{jenkinson2002improved} + LDDMM}
                           & affine only   & $\mathbf{0.726 \pm 0.023}$ & $\mathbf{0.731}$ \\
                           & full          & $0.832 \pm 0.012$ & $0.835$ \\
      \midrule
      \multirow{3}{*}{Full Affine LDDMM (ours)}
                           & affine only   & $0.718 \pm 0.034$ & $0.728$ \\
                           & full          & $0.837 \pm 0.013$ & $0.841$ \\
                           & full + refinement    & $\mathbf{0.843 \pm 0.012}$ & $\mathbf{0.847}$ \\
      \bottomrule                                                                                     
    \end{tabular}                                                                                          
\end{table}
\begin{table}[h!]
  \centering
  \small
  \begin{threeparttable}
  \caption{Paired comparisons on the IXI cohort ($n=54$ subjects). Each block fixes one arm of our method as the reference and compares it against every baseline sharing the same deformation part. \textbf{Wins}: subjects on which the reference scores higher; \textbf{Median $\Delta$}: median per-subject difference (reference $-$ comparator); \textbf{$r$}: matched-pairs rank-biserial effect size; \textbf{$p$}: one-sided Wilcoxon signed-rank ($H_1$: reference $>$ comparator). Holm--Bonferroni correction across the nine tests leaves all conclusions unchanged. The single non-significant test is the affine-only comparison against FLIRT; there the direction is in fact reversed, FLIRT attaining a higher affine-only Dice than ours ($p=2.3\times10^{-3}$ for the reverse hypothesis).}
  \label{tab:dice_pairwise}
  \begin{tabular}{ll ccc c}
    \toprule
    \textbf{Comparator} & \textbf{Part} & \textbf{Wins} & \textbf{Median $\Delta$} & \textbf{$r$} & \textbf{$p$} \\
    \midrule
    \multicolumn{6}{l}{\emph{Reference: ours, affine component only}} \\
    CARL~\cite{greer2025carl}                        & affine only & $48/54$ & $+0.045$ & $0.87$  & $1.6\times10^{-8}$ \\
    FLIRT~\cite{jenkinson2002improved}  & affine only & $16/54$ & $-0.003$ & $-0.44$ & $0.998$\tnote{$\dagger$} \\
    \midrule
    \multicolumn{6}{l}{\emph{Reference: ours, joint optimization}} \\
    uniGradICON~\cite{tian2024unigradicon}           & full & $53/54$ & $+0.029$ & $0.99$ & $1.0\times10^{-10}$ \\
    CARL~\cite{greer2025carl}                        & full & $53/54$ & $+0.033$ & $1.00$ & $8.6\times10^{-11}$ \\
    FLIRT~\cite{jenkinson2002improved} + LDDMM       & full & $49/54$ & $+0.006$ & $0.82$ & $7.7\times10^{-8}$ \\
    \midrule
    \multicolumn{6}{l}{\emph{Reference: ours, after refinement}} \\
    uniGradICON~\cite{tian2024unigradicon}           & full & $54/54$ & $+0.035$ & $1.00$ & $8.1\times10^{-11}$ \\
    CARL~\cite{greer2025carl}                        & full & $54/54$ & $+0.038$ & $1.00$ & $8.1\times10^{-11}$ \\
    FLIRT~\cite{jenkinson2002improved} + LDDMM       & full & $53/54$ & $+0.011$ & $0.99$ & $1.3\times10^{-10}$ \\
    Ours, joint optimization                         & full & $51/54$ & $+0.006$ & $0.98$ & $1.7\times10^{-10}$ \\
    \bottomrule
  \end{tabular}
  \begin{tablenotes}[flushleft]
    \footnotesize
    \item[$\dagger$] Direction reversed: the one-sided $p$ close to one indicates that FLIRT attains the higher affine-only Dice. The reverse hypothesis ($H_1$: FLIRT $>$ ours) gives $p = 2.3\times10^{-3}$.
  \end{tablenotes}
  \end{threeparttable}
\end{table}


\Cref{tab:dice_comparison} compares our method against current state-of-the-art techniques. Where the method permits, we evaluate the registration quality of the affine and diffeomorphic components separately.

The first observation from \cref{fig:ixi_box_plot} and \cref{tab:dice_comparison} is that on this task the classical pipelines, FLIRT+LDDMM and ours, outperform both learning-based methods we tested; for the affine component alone the comparison is possible only against CARL, since uniGradICON produces no separable affine. The margin between the two classical pipelines is narrow, and \cref{tab:dice_pairwise} is required to separate them. We use a one-sided Wilcoxon signed-rank test~\cite{rosner2006wilcoxon}, which tests whether the distribution of per-subject differences $(\mathrm{ours} - \mathrm{baseline})$ is centered above zero. Against uniGradICON and CARL the hypothesis holds with high confidence for every deformation part. Against FLIRT the picture is more nuanced. FLIRT attains a higher affine-only Dice than our affine component, and although the median difference is small, $-0.003$, it is significant ($p = 2.3\times10^{-3}$ for the reverse hypothesis, $r = -0.44$). Once the diffeomorphic component is composed, the ordering reverses: our joint result exceeds FLIRT+LDDMM on $49$ of $54$ subjects, and the refined result on $53$ of $54$.

We conclude that \textbf{the FA-LDDMM affine is a better starting point for diffeomorphic refinement than an affine estimated in isolation}. FLIRT minimizes an affine-only objective and, judged by that objective's own metric (DICE), does so better than we do. Our affine is not an affine-only optimum but the affine component of a joint affine and diffeomorphic optimum: it is selected to leave a residual that a diffeomorphism can absorb cheaply, not to minimize affine mismatch. That the two affine components are separated by a significant margin in FLIRT's favour, while the composed transformations are separated by a significant margin in ours, is direct evidence that affine-only overlap is a poor predictor of the quality of the final registration. Two affine transformations can achieve equal overlap and yet behave very differently under subsequent diffeomorphic refinement.

\section{Conclusion and future works}

In this article, we introduced a general framework based on an extension of LDDMM to jointly perform global affine transformations and local diffeomorphic deformations in image registration. The Full Affine (FA) model couples a general affine transformation with an LDDMM deformation, while the Decomposed Affine (DA) model separately parameterizes rotations, translations, and anisotropic scalings alongside the LDDMM deformation. Both models were implemented in two and three dimensions within the \texttt{Demeter} library. Unlike learning-based methods, our approach does not require training data or anatomical annotations. Moreover, the variational formulation makes it possible to compute continuous geodesic trajectories rather than a sequential composition of endpoint transformations. This provides an interpretable parameterization that explicitly distinguishes affine contributions from local non-linear components.

Numerical experiments demonstrated that our joint optimization strategy effectively avoids local minima and pathological deformations often encountered by standard affine-then-diffeomorphic procedures. To make this joint optimization tractable, we introduced a variational weighting strategy that manages the coarse-to-fine handover. This highlights that even in a joint formulation, it is important to control the optimization trajectory in order to prevent the diffeomorphic component from absorbing global affine motion. On brain MR images, our approach outperforms current state-of-the-art deep-learning and classical baselines. Indeed, a purely affine optimum is not necessarily a good initialization for diffeomorphic registration, underscoring the need for joint optimization of global and local motions. Note that while a final LBFGS refinement step was employed in our experiments, this does not revert to a sequential pipeline, since the affine initialization was derived from the joint optimization rather than estimated in isolation.

Yet, we should also mention a few avenues for further improvement and extension of the framework presented in this paper. While the computational cost per registration remains higher than the inference time of forward-pass learning methods, a direct comparison is asymmetric since our approach does not perform any training phase required by deep learning. Nevertheless, high-resolution 3D experiments still require a GPU with substantial VRAM. Furthermore, our medical imaging validation was restricted to a single anatomy (brain), a single target (MNI template), and a mono-modal similarity metric (SSD), although the \texttt{Demeter} library natively supports alternative metrics. Finally, the decoupling of the deformation modes remains partial. Indeed, our proposed method does not theoretically ensure that the diffeomorphic component cannot reproduce global affine deformations.

Future work will aim to enforce a stricter separation between affine and diffeomorphic components, either by constraining the diffeomorphic vector field to exclude affine modes or by penalizing its projection onto the space of affine deformations in the objective functional \cite{mouhli2026varifold}. To eliminate the need for a separate refinement stage, dynamic optimizer switching (e.g., from Adam to LBFGS) during the integration could also be explored. Other natural extensions include metamorphosis for intensity variations, multimodal similarity measures, and intermediate transformation groups such as locally affine models. Finally, retaining the full trajectory of the geodesics opens the way to more advanced statistical analyses on initial momenta, atlas construction, and parallel transport.

\clearpage
\addcontentsline{toc}{section}{References}
\printbibliography

\appendix

\section{Additional figures}

\begin{figure}
    \centering
    \includegraphics[width=1\linewidth]{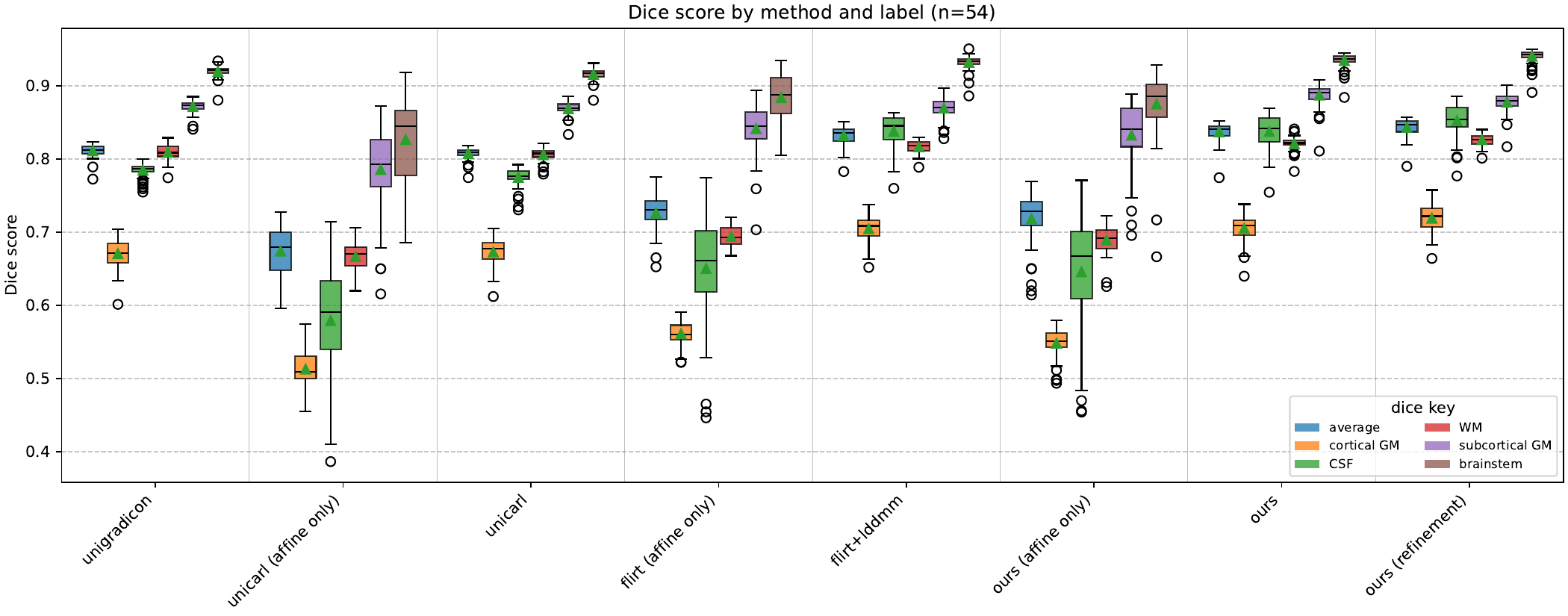}
\caption{                             
    Dice similarity coefficient per anatomical structure and registration method on the IXI test set ($n=54$ subjects, common to all methods). For each method, boxes show the distribution of Dice scores for the average over all structures (blue) and for each individual structure — CSF, cortical GM, subcortical GM, WM, and brainstem — following the segmentation label convention CSF~$=1$, cortical~GM~$=2$, subcortical~GM~$=3$, WM~$=4$, brainstem~$=5$. Box edges mark the 25th/75th percentiles, the horizontal line the median, the green triangle the mean, whiskers extend to $1.5\times$ the interquartile range, and circles denote outliers. It is the exhaustive version of figure \ref{fig:ixi_box_plot}.                           
  }                 
  \label{fig:dice_ixi_box_exhaustive}
\end{figure}

\appendix

\end{document}